\newif\ifreport
\theoremstyle{thmstyleone}
\newtheorem{theorem}{Theorem}
\theoremstyle{thmstyletwo}
\newtheorem{remark}{Remark}
\crefname{equation}{}{}
\Crefname{equation}{Equation}{Equations}
\newcommand{\datadir}{./Data}
\pgfplotsset{compat=1.16}
\pgfplotsset{every axis/.append style={
	cycle list name=mycolor,
	label style={font=\scriptsize},
	legend style={font=\scriptsize,legend columns=-1,/tikz/every even column/.append style={column sep=1em}},
	tick label style={font=\scriptsize},
	width=\linewidth
}}
\newcommand{\btree}[1]{\Forest{btree #1}}
\DeclareMathOperator{\diag}{diag}
\newenvironment{butchertableau}[2][1.35]{\array{#2}}{\endarray}
\newcommand{\btentry}[1]{\scriptstyle{#1}}
\NewDocumentCommand{\eye}{g g}{\mathbf{I}\IfValueT{#1}{_{{#1} \times \IfValueTF{#2}{#2}{#1}}}}
\NewDocumentCommand{\zero}{g g}{0\IfValueT{#1}{_{{#1} \IfValueT{#2}{\times {#2}}}}}
\NewDocumentCommand{\one}{g g}{\mathbf{1}\IfValueT{#1}{_{{#1} \IfValueT{#2}{\times {#2}}}}}
\NewDocumentCommand{\unitvec}{m}{\mathbf{e}_{#1}}
\RenewDocumentCommand{\R}{g g}{\mathbb{R}^{\IfValueT{#1}{#1} \IfValueT{#2}{\times {#2}}}}
\NewDocumentCommand{\Cplx}{g g}{\mathbb{C}^{\IfValueT{#1}{#1} \IfValueT{#2}{\times {#2}}}}
\NewDocumentCommand{\comp}{m O{} s}{^{ \left\{ #1 \right\} #2 \IfBooleanT{#3}{T} }}
\newcommand{\nvar}{d}
\newcommand{\lte}{\textnormal{lte}}
\newcommand{\tree}{\mathfrak{t}}
\newcommand{\utree}{\mathfrak{u}}
\renewcommand{\A}{\mathbf{A}}
\renewcommand{\b}{\mathbf{b}}
\renewcommand{\c}{\mathbf{c}}
\begin{document}
	
\title[Eliminating Order Reduction with GARK Methods]{Eliminating Order Reduction on Linear, Time-Dependent ODEs with GARK Methods}
\author*[1]{\fnm{Steven} \sur{Roberts}}\email{roberts115@llnl.gov}
\author[2]{\fnm{Adrian} \sur{Sandu}}\email{sandu@cs.vt.edu}

\affil*[1]{\orgdiv{Center for Applied Scientific Computing}, \orgname{Lawrence Livermore National Laboratory}, \orgaddress{\city{Livermore}, \state{California}, \country{USA}}}

\affil[2]{\orgdiv{Computational Science Laboratory, Department of Computer Science}, \orgname{Virginia Tech}, \orgaddress{\street{2202 Kraft Dr.}, \city{Blacksburg}, \postcode{24060}, \state{Virginia}, \country{USA}}}

\abstract{%
When applied to stiff, linear differential equations with time-dependent forcing, Runge--Kutta methods can exhibit convergence rates lower than predicted by the classical order condition theory.  Commonly, this order reduction phenomenon is addressed by using an expensive, fully implicit Runge--Kutta method with high stage order or a specialized scheme satisfying additional order conditions.  This work develops a flexible approach of augmenting  an arbitrary Runge--Kutta method with a fully implicit method used to treat the forcing such as to maintain the classical order of the base scheme.  Our methods and analyses are based on the general-structure additive Runge--Kutta framework.  Numerical experiments using diagonally implicit, fully implicit, and even explicit Runge--Kutta methods confirm that the new approach eliminates order reduction for the class of problems under consideration, and the base methods achieve their theoretical orders of convergence.}

\keywords{Order reduction, Runge--Kutta, General-structure additive Runge--Kutta methods, Convergence analysis}

\pacs[MSC Classification]{65L04, 65L20}

\ifreport
\cslauthor{Steven Roberts and Adrian Sandu}
\cslyear{22}
\cslreportnumber{1}
\cslemail{roberts115@llnl.gov, sandu@cs.vt.edu}
\csltitlepage
\fi

\maketitle

\section{Introduction}
\label{sec:Order_Reduction:intro}

Consider the linear, constant-coefficient, inhomogeneous system of ordinary differential equations
\begin{equation} \label{eq:linear_inhomogeneous_ode}
	y' = f(t, y) = L y + g(t), 
	\quad
	y(t_0) = y_0,
	\quad
	t \in [t_0, t_f],
\end{equation}
where $y(t) \in \Cplx{\nvar}$.  Problems of this form frequently arise from the spatial discretization of linear partial differential equations (PDEs).  In this case, $L \in \Cplx{\nvar \times \nvar}$ approximates spatial differential operators and $g(t)$ accounts for time-dependent source terms and boundary conditions.

Runge--Kutta methods are widely used to integrate \cref{eq:linear_inhomogeneous_ode}.  One step of an $s$-stage Runge--Kutta method using timestep $h = t_{n+1}-t_{n}$ is given by \cite{kutta1901beitrag}
\begin{subequations} \label{eq:rk}
	\begin{align}
		\label{eq:rk:stages}
		Y_i &= y_{n} + h \sum_{j=1}^s a_{i,j} f(t_n + c_j h, Y_j), \qquad i = 1, \dots, s, \\
		\label{eq:rk:step}
		y_{n+1} &= y_{n} + h \sum_{j=1}^s b_{j} f(t_n + c_j h, Y_j),
	\end{align}
\end{subequations}
and its coefficients are concisely represented by the Butcher tableau
\begin{equation} \label{eq:rk_tableau}
	\begin{butchertableau}{c|c}
		c & A \\ \hline
		& b^T
	\end{butchertableau}.
\end{equation}

An important special case of \cref{eq:linear_inhomogeneous_ode} is the Prothero--Robinson (PR) test problem \cite{prothero1974stability}:
\begin{equation} \label{eq:pr}
	y' = \lambda (y - \phi(t)) + \phi'(t),
	\qquad
	y(0) = \phi(0).
\end{equation}
In their seminal work, Prothero and Robinson analyzed the error and stability of Runge--Kutta methods applied to \cref{eq:pr} as $\Re(h \lambda) \to -\infty$ and $h \to \zero$.  For this seemingly innocuous problem, the order of convergence for a Runge--Kutta method may be lower than what is predicted by classical order condition theory: a phenomenon referred to as \textit{order reduction}.  Classical order condition theory \cite[Sections I.7 and II.3]{hairer1993solving} typically requires $f$ to have a moderate Lipschitz constant that is independent of the timestep $h$, and for the PR problem, this assumption does not hold.  The analysis of order reduction has been extended to many other classes of problems including linear PDEs \cite{sanz1986convergence,verwer1986convergence,ostermann1992runge} and general, nonlinear problems \cite{frank1981concept,burrage1986study}.

It is well-known that the Runge--Kutta simplifying assumptions
\begin{subequations} \label{eq:simplifying_assumptions}
	\begin{alignat}{4}
		B(p):&
		\quad
		& b^T c^{k-1} &= \frac{1}{k},
		& \quad
		k &= 1, \dots, p, \label{eq:simplifying_assumptions:B} \\
		C(q):&
		\quad
		& A c^{k-1} &= \frac{c^k}{k},
		& \quad
		k &= 1, \dots, q, \label{eq:simplifying_assumptions:C}
	\end{alignat}
\end{subequations}
mitigate the order reduction phenomenon \cite[Section IV.15]{hairer1996solving}.  A method satisfying $B(p)$ and $C(q)$ with $p \geq q$ is said to have stage order $q$.  Ideally, a method would have the stage order equal to the classical order, but in many cases, this cannot be achieved.  Explicit Runge--Kutta methods, for example, have a maximum stage order of one, while diagonally implicit methods have a maximum stage order of two.  The concept of weak stage order (WSO) has been explored in \cite{ketcheson2020dirk}.  As the name suggests, it considers weaker but sufficient conditions to avoid order reduction.
In \cite{ostermann1992runge,ostermann1993rosenbrock}, the authors derive a rigorous error expansion and order conditions for stiff, parabolic PDEs.  Similar results have been derived for the PR problem in \cite{rang2014analysis,rang2016prothero}.

An approach used to address order reduction in initial boundary value problems is a modified treatment of the boundary conditions in the stages \cref{eq:rk:stages} \cite{abarbanel1996removal,pathria1997correct,alonso2002runge,alonso2004avoiding}.  Many of these utilize time derivatives of the boundary conditions which would not be required in a traditional Runge--Kutta stage.  One can interpret this as a composite method where a Runge--Kutta method is used to treat the differential operators, and a multi-derivative scheme is used to treat the boundary conditions.

In this work, we propose integrating \cref{eq:linear_inhomogeneous_ode} using a general-structure additive Runge--Kutta (GARK) method \cite{sandu2015generalized} to eliminate order reduction.  For an arbitrary ``base'' Runge--Kutta method used to treat the linear term $L y$, we derive a different, fully implicit Runge--Kutta scheme used to treat the forcing $g(t)$.  Our approach does not increase the number of linear solves per step nor does it require time derivatives of $g(t)$.  In many cases, we are able to \textit{reduce} the number of $g(t)$ evaluations per step compared to the base method.  Furthermore, unlike stage order conditions, there are no restrictions on the order for explicit or diagonally implicit method structures.  In fact, no order conditions need to be imposed on the base method beyond classical order conditions; order reduction is mitigated by imposing order conditions on the companion method.  

The remainder of this paper is organized as follows.  In \cref{sec:Order_Reduction:formulation}, the new GARK-based methods for solving \cref{eq:linear_inhomogeneous_ode} are derived.  \Cref{sec:Order_Reduction:order_conditions} develops the error analysis and order condition theory. \Cref{sec:Order_Reduction:pr,sec:Order_Reduction:advection,sec:Order_Reduction:heat_eq} provide three numerical experiments that test the convergence properties of Runge--Kutta methods and their GARK extensions and validate the new methodology.  Connections to previous work on alleviating order reduction are explained in 
\cref{sec:Order_Reduction:connections}.  Finally, the findings of the paper are summarized in \cref{sec:Order_Reduction:conclusion}.

\section{Method Formulation}
\label{sec:Order_Reduction:formulation}

We will consider a splitting of \cref{eq:linear_inhomogeneous_ode} into the linear term and the time-dependent forcing term:
\begin{equation} \label{eq:ode_split}
	y' = \underbrace{L y}_{\eqqcolon f\comp{1}(t, y)} + \underbrace{g(t)}_{\eqqcolon f\comp{2}(t, y)}.
\end{equation}
A general, two-way partitioned GARK scheme solves \cref{eq:ode_split} as follows \cite{sandu2015generalized}:
\begin{equation} \label{eq:gark}
	\begin{alignedat}{2}
		Y\comp{1}_i &= y_n + h \sum_{j=1}^{s\comp{1}} a\comp{1,1}_{i,j} L Y\comp{1}_j + h \sum_{j=1}^{s\comp{2}} a\comp{1,2}_{i,j} g\mleft( t_n + c\comp{2}_j h \mright), & \qquad & i = 1, \ldots s\comp{1}, \\
		Y\comp{2}_i &= y_n + h \sum_{j=1}^{s\comp{1}} a\comp{2,1}_{i,j} L Y\comp{1}_j + h \sum_{j=1}^{s\comp{2}} a\comp{2,2}_{i,j} g \mleft( t_n + c\comp{2}_j h \mright), & \qquad & i = 1, \ldots s\comp{2}, \\
		y_{n+1} &= y_n + h \sum_{j=1}^{\comp{1}} b\comp{1}_j L Y\comp{1}_j + h \sum_{j=1}^{s\comp{2}} b\comp{2}_j g \mleft( t_n + c\comp{2}_j h \mright).
	\end{alignedat}
\end{equation}
In contrast to \cref{eq:rk}, the stages in \cref{eq:gark} are partitioned, there are four sets of $A$ coefficient matrices used in the stages, and two sets of $b$ coefficients.  Note that the computation of $y_{n+1}$ in \cref{eq:gark} does not involve $Y\comp{2}_i$.  With $Y\comp{1}_i$ serving as the only useful stages, it may appear that \cref{eq:gark} degenerates into an additive Runge--Kutta (ARK) method which does not have partitioned stages.  This is not the case, however, as the GARK formalism allows the additional flexibility of treating the linear term and forcing terms with a different number of stages.  That is, $\A\comp{1,2}$ can be rectangular.

We can simplify and rewrite \cref{eq:gark} in the compact form
\begin{subequations} \label{eq:gark_simplified}
	\begin{align}
		\label{eq:gark_simplified:stages}
		Y\comp{1} &= \one{s\comp{1}} \otimes y_n + \left( \A\comp{1,1} \otimes Z \right) Y\comp{1} + h \left( \A\comp{1,2} \otimes \eye{\nvar} \right) g \mleft( t_n + \c\comp{2} h \mright), \\
		\label{eq:gark_simplified:step}
		y_{n+1} &= y_{n} + \left( \b\comp{1}* \otimes Z \right) Y\comp{1} + h \left( \b\comp{2}* \otimes \eye{\nvar} \right) g \mleft( t_n + \c\comp{2} h \mright),
	\end{align}
\end{subequations}
where $\otimes$ denotes the Kronecker product, $\one{s\comp{1}}$ is a vector of ones of dimension $s\comp{1}$, and $Z = h L$.  We also use the notation
\begin{equation*}
	Y\comp{1} \coloneqq \begin{bmatrix}
		Y\comp{1}_1 \\
		\vdots \\
		Y\comp{1}_{s\comp{1}}
	\end{bmatrix},
	\qquad
	g \mleft( t_n + \c\comp{2} h \mright) \coloneqq \begin{bmatrix}
		g \mleft( t_n + c\comp{2}_1 h \mright) \\
		\vdots \\
		g \mleft( t_n + c\comp{2}_{s\comp{2}} h \mright) \\
	\end{bmatrix}.
\end{equation*}
We  represent the simplified method \cref{eq:gark_simplified} compactly with the tableau
\begin{equation} \label{eq:tableau_simplified}
	\begin{butchertableau}{c|c}
		\c\comp{1}* & \c\comp{2}* \\ \hline
		\A\comp{1,1} & \A\comp{1,2} \\ \hline
		\b\comp{1}* & \b\comp{2}*
	\end{butchertableau}.
\end{equation}
We refer to $(\A\comp{1,1}, \b\comp{1}, \c\comp{1})$ as the \textit{base method}.  The coefficients $(\A\comp{1,2}, \b\comp{2}, \c\comp{2})$ do not necessarily define a Runge--Kutta method because $\A\comp{1,2}$ can be retangular.  Nevertheless, we refer to it as the \textit{companion method}.

The implicitness of \cref{eq:gark_simplified} is entirely determined by the structure of $\A\comp{1,1}$.  With $f\comp{2}$ only a function of time, we can make $\A\comp{1,2}$ a fully-dense matrix without incurring additional function evaluations or linear solves.
\section{Order Conditions}
\label{sec:Order_Reduction:order_conditions}

For the error analysis in this section, we will assume that $g(t)$ is at least $p$-times differentiable.  Thus, the exact solution $y(t)$ is $p+1$-times differentiable.

\subsection{Classical Order Conditions}

For sufficiently small $h$, we can rely on existing, tree-based GARK order condition theory to analyze the local truncation error of the new method \cref{eq:gark_simplified}.  We present these order conditions in the following theorem which is proved in Appendix \ref{app:classical_order_conditions}.

\begin{theorem}[Classical order conditions]
	\label{tmh:classical_order_conditions}
	The method \cref{eq:gark_simplified} applied to \cref{eq:linear_inhomogeneous_ode} has classical order $p$ if and only if
	\begin{subequations} \label{eq:nonstiff_order_conditions}
		\begin{alignat}{2}
			\label{eq:nonstiff_order_conditions:linear}
			\b\comp{1}* \left( \A\comp{1,1} \right)^{k-1} \one{s\comp{1}} &= \frac{1}{k!},
			\qquad & 1 &\leq k \leq p \\
			\label{eq:nonstiff_order_conditions:bushy}
			\b\comp{2}* \c\comp{2}[\times (k-1)] &= \frac{1}{k},
			\qquad & 1 &\leq k \leq p \\
			\label{eq:nonstiff_order_conditions:palm}
			\b\comp{1}* \left( \A\comp{1,1} \right)^{k-1} \A\comp{1,2} \c\comp{2}[\times (\ell-1)] &= \frac{(\ell - 1)!}{(\ell + k)!},
			\qquad & 1 &\leq k, \ell \text{ and } k + \ell \leq p,
		\end{alignat}
		where ``$\times$'' in an exponent indicates an element-wise power of a vector.
	\end{subequations}
\end{theorem}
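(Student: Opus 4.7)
The plan is to match Taylor expansions of the exact flow of \cref{eq:linear_inhomogeneous_ode} with those of the GARK one-step map \cref{eq:gark_simplified}. Because $f\comp{1}$ is linear in $y$ and $f\comp{2}$ depends only on $t$, both expansions live in the algebra generated by the monomials $L^k y_n$ and $L^k g^{(j)}(t_n)$, so the full N-tree bookkeeping of the generic GARK theory collapses to three narrow families of elementary weights, one for each condition in \cref{eq:nonstiff_order_conditions}.

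First, I would differentiate $y' = L y + g(t)$ recursively to obtain $y^{(k)}(t_n) = L^k y_n + \sum_{j=0}^{k-1} L^{k-1-j} g^{(j)}(t_n)$, substitute into $y(t_n + h) = \sum_{k \ge 0} (h^k/k!)\, y^{(k)}(t_n)$, and reindex the resulting double sum so that the exact flow splits into an explicit pure-$y_n$ part and an explicit $g^{(j)}(t_n)$ part with rational coefficients. In parallel, I would invert \cref{eq:gark_simplified:stages} formally via the Neumann series $(\eye - \A\comp{1,1} \otimes Z)^{-1} = \sum_{k \ge 0} (\A\comp{1,1})^k \otimes Z^k$, combined with the Taylor expansion $g(t_n + \c\comp{2} h) = \sum_{\ell \ge 0} (h^\ell/\ell!)\, \c\comp{2}[\times \ell] \otimes g^{(\ell)}(t_n)$. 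Substituting back into \cref{eq:gark_simplified:step} and using $Z = h L$ produces exactly three families of monomials in $y_{n+1}$: pure linear terms $L^{k+1} y_n$ weighted by $\b\comp{1}* (\A\comp{1,1})^k \one{s\comp{1}}$, pure forcing terms $g^{(\ell)}(t_n)$ weighted by $\b\comp{2}* \c\comp{2}[\times \ell] / \ell!$, and cross terms $L^{k+1} g^{(\ell)}(t_n)$ weighted by $\b\comp{1}* (\A\comp{1,1})^k \A\comp{1,2} \c\comp{2}[\times \ell] / \ell!$.

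I would then subtract the two expansions and equate coefficients family by family up to total $h$-degree $p$. A single reindexing $k \mapsto k+1$, $\ell \mapsto \ell+1$ on each family rewrites the three resulting equalities as \cref{eq:nonstiff_order_conditions:linear,eq:nonstiff_order_conditions:bushy,eq:nonstiff_order_conditions:palm}, respectively, giving the ``if'' direction.

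The main obstacle is the converse: showing that classical order $p$ forces each scalar coefficient equation \emph{individually} rather than merely some linear combination of them. My plan is an independence-of-data argument. The triple $(L, y_n, g)$ is unconstrained, so choosing for instance $L$ diagonal with distinct nonzero eigenvalues, $y_n$ a cyclic vector for $L$, and $g$ a polynomial with arbitrarily prescribed Taylor coefficients at $t_n$ makes the monomials $L^k y_n$, $g^{(\ell)}(t_n)$, and $L^{k+1} g^{(\ell)}(t_n)$ linearly independent, forcing each scalar equation on its own. As a cross-check, one could instead invoke the GARK N-tree framework of \cite{sandu2015generalized} and verify that the only N-trees with nonvanishing elementary differentials for the partitioning \cref{eq:ode_split} are the linear $f\comp{1}$ chain, the pure bushy $f\comp{2}$ tree, and the ``palm'' tree consisting of an $f\comp{1}$ chain topped by an $f\comp{2}$ bush, recovering the same three families of conditions.
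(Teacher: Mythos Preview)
Your proposal is correct, and the coefficient matching you outline does produce exactly \cref{eq:nonstiff_order_conditions:linear,eq:nonstiff_order_conditions:bushy,eq:nonstiff_order_conditions:palm}. However, your primary route differs from the paper's. The paper does not expand the exact and numerical solutions directly; instead it autonomizes \cref{eq:ode_split} into a \emph{three}-way partition $\widetilde{y}' = \widetilde{f}\comp{1}(\widetilde{y}) + \widetilde{f}\comp{2}(\widetilde{y}) + \widetilde{f}\comp{3}(\widetilde{y})$ with $\widetilde{y} = (y,t)$ and $\widetilde{f}\comp{3} \equiv (0,1)^T$, then invokes the general N-tree GARK order condition theory and computes which elementary differentials vanish for this specific right-hand side. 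The surviving three-trees are precisely the linear $\btree{[1]}$-chains, the bushy $\btree{[2]}$-rooted trees with $\btree{[3]}$ leaves, and the palms (a $\btree{[1]}$-chain topped by such a bush), giving the three families of conditions. Your direct Taylor-expansion argument is more elementary and self-contained---it avoids the tree formalism entirely and makes the ``only if'' direction transparent via the free choice of $(L, y_n, g)$---whereas the paper's approach situates the result inside the existing GARK framework and gets necessity for free from the known independence of elementary differentials over N-trees. What you describe as a ``cross-check'' at the end is in fact the paper's actual proof.
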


\subsection{Stiff Order Conditions}

Before the asymptotic regime is reached, a method satisfying \cref{eq:nonstiff_order_conditions} may exhibit an order of convergence less than $p$.  In order to characterize this behavior we need to reexamine the local truncation error produced at each step and the accumulated global error.

We define the global errors at step $n$ in the stages \cref{eq:gark_simplified:stages} and solution \cref{eq:gark_simplified:step} to be
\begin{equation}
	\label{eqn:global-error-definition}
	\begin{split}
		E_n &\coloneqq y \mleft( t_n + \c\comp{1} h \mright) - Y\comp{1} = \begin{bmatrix}
			y \mleft( t_n + c\comp{1}_1 h \mright) - Y\comp{1}_1 \\
			\vdots \\
			y \mleft( t_n + c\comp{1}_{s\comp{1}} h \mright) - Y\comp{1}_{s\comp{1}}
		\end{bmatrix} \\
		e_{n} &\coloneqq y(t_n) - y_n,
	\end{split}
\end{equation}
respectively.

In \cref{eq:gark_simplified:stages}, we replace the initial condition $y_n$ with $y(t_n)$ and replace the stages $Y\comp{1}$ with $y \mleft( t_n + \c\comp{1} h \mright)$:
\begin{equation} \label{eq:stage_errs}
	\begin{split}
		y \mleft( t_n + \c\comp{1} h \mright) &= \one{s\comp{1}} \otimes y(t_n) + \left( \A\comp{1,1} \otimes Z \right) y \mleft( t_n + \c\comp{1} h \mright) \\
		& \quad + h \left( \A\comp{1,2} \otimes \eye{\nvar} \right) g \mleft( t_n + \c\comp{2} h \mright) + \Delta_n \\
		&= \one{s\comp{1}} \otimes y(t_n) + \left( \A\comp{1,1} \otimes Z \right) y \mleft( t_n + \c\comp{1} h \mright) \\
		& \quad + h \left( \A\comp{1,2} \otimes \eye{\nvar} \right) y' \mleft( t_n + \c\comp{2} h \mright) \\
		& \quad - \left( \A\comp{1,2} \otimes Z \right) y \mleft( t_n + \c\comp{2} h \mright) + \Delta_n.
	\end{split}
\end{equation}
In general, the exact solution does not exactly satisfy the stage equations, and thus, there is a stage defect $\Delta_n$.  A Taylor expansion yields
\begin{equation*}
	\begin{split}
		\Delta_n &= \sum_{k=1}^{p}  \frac{\c\comp{1}[k] - k \A\comp{1,2} \c\comp{2}[\times (k-1)]}{k!} \otimes \left( h^k y^{(k)}(t_n) \right) \\
		& \quad + \sum_{k=0}^{p} \frac{\A\comp{1,2} \c\comp{2}[\times k] - \A\comp{1,1} \c\comp{1}[\times k]}{k!} \otimes \left( h^k Z y^{(k)}(t_n) \right) + R_n.
	\end{split}
\end{equation*}
The term $R_n$ contains the remainder terms from the Taylor series.  For brevity, we will defer writing the full form of these terms until the end of the derivation.  We use $R_n$ instead of a more typical $\order{h^{p+1}}$ because the residuals depend on $Z$, which, in turn, can depend on $h$ in arbitrary ways.

The global error in the stages follows by subtracting \cref{eq:gark_simplified:stages} from \cref{eq:stage_errs}:
\begin{align*}
	E_n &= \one{s\comp{1}} \otimes e_n + \left( \A\comp{1,1} \otimes Z \right) E_n + \Delta_n \\
	&= \left( \eye{\nvar s\comp{1}} - \A\comp{1,1} \otimes Z \right)^{-1} \left( \one{s\comp{1}} \otimes e_n + \Delta_n \right).
\end{align*}
Now we can repeat the process for the step \cref{eq:gark_simplified:step} by substituting $y(t_n)$ for $y_n$ and $y(t_{n+1})$ for $y_{n+1}$:
\begin{equation} \label{eq:step_errs}
	\begin{split}
		y(t_{n+1}) &= y(t_n) + \left( \b\comp{1}* \otimes Z \right) y \mleft( t_n + \c\comp{1} h \mright) - \left( \b\comp{2}* \otimes Z \right) y \mleft( t_n + \c\comp{2} h \mright) \\
		& \quad + h \left( \b\comp{2}* \otimes \eye{\nvar} \right) y' \mleft( t_n + \c\comp{2} h \mright) + \delta_n, \\
		\delta_n &= \sum_{k=1}^{p} \frac{1 - k \b\comp{2}* \c\comp{2}[\times (k-1)]}{k!} h^k y^{(k)}(t_n) \\
		& \quad + \sum_{k=0}^{p} \frac{\b\comp{2}* \c\comp{2}[\times k] - \b\comp{1}* \c\comp{1}[\times k]}{k!} h^k Z y^{(k)}(t_n) + r_n.
	\end{split}
\end{equation}
Note the step defect $\delta_n$ also contains a remainder term $r_n$.

The global error recurrence is obtained by subtracting \cref{eq:gark_simplified:step} from \cref{eq:step_errs}.  This yields
\begin{equation} \label{eq:err_recurrence}
	e_{n+1} = e_n + \left( \b\comp{1}* \otimes Z \right) E_n + \delta_n = R\comp{1}(Z) e_n + \lte_n,
\end{equation}
where the linear stability function is given by
\begin{equation} \label{eq:stability_function}
	R\comp{1}(z) = 1 + z \b\comp{1} \left( \eye{s\comp{1}} - z \A\comp{1,1} \right) \one{s\comp{1}}.
\end{equation}
The local truncation error at step $n$ is
\begin{subequations} \label{eq:lte_def}
	\begin{align}
		\label{eq:lte_def:exact}
		\lte_n &= \left( \b\comp{1}* \otimes Z \right) \left( \eye{\nvar s\comp{1}} - \A\comp{1,1} \otimes Z \right)^{-1} \Delta_n + \delta_n \\
		\label{eq:lte_def:series}
		&= \sum_{k=0}^{p} W_k(Z) \frac{h^k}{k!} y^{(k)}(t_n) + Q_n,
	\end{align}
\end{subequations}
with the Taylor series remainders combining to give
\begin{align*}
	Q_n &= \left( \b\comp{1}* \otimes Z \right) \left( \eye{\nvar s\comp{1}} - \A\comp{1,1} \otimes Z \right)^{-1} R_n + r_n \\
	&= \left( y^{(p+1)}(\xi_n) + \widetilde{W}_{p+1}(Z) y^{(p+1)}(\boldsymbol{\zeta}_n) \right) \frac{h^{p+1}}{(p+1)!},
\end{align*}
for some $\xi_n \in (t_n, t_{n+1})$ and $\boldsymbol{\zeta}_n \in \R{s\comp{2}}$ with $\zeta_{n,i}$ between $t_n$ and $t_n + c\comp{2}_i h$ for $i = 1, \dots, s\comp{2}$.
The local error coefficients are defined as
\begin{equation} \label{eq:lte_residuals}
	\begin{split}
		W_0(z) &= z \left( \b\comp{2}* \one{s\comp{2}} - \b\comp{1}* \one{s\comp{1}} \right) \\
		& \quad + z^2 \b\comp{1}* \big( \eye{s\comp{1}} - z \A\comp{1,1} \big)^{-1} \left( \A\comp{1,2} \one{s\comp{2}} - \A\comp{1,1} \one{s\comp{1}} \right), \\
		%
		W_k(z) &= 1 + \widetilde{W}_{k}(z) \one{s\comp{2}}, \\
		\widetilde{W}_k(z) &= \left( \b\comp{2}* + z \b\comp{1}* \big( \eye{s\comp{1}} - z \A\comp{1,1} \big)^{-1} \A\comp{1,2} \right) \left( z \big( \mathbf{C}\comp{2} \big)^k \right. \\
		& \quad \left. - k \big( \mathbf{C}\comp{2} \big)^{k-1} \right), \qquad k \geq 1,
	\end{split}
\end{equation}
where $\mathbf{C}\comp{2} = \diag \mleft( c\comp{2}_1, \dots, c\comp{2}_{s\comp{2}} \mright)$.  For simplicity, both \cref{eq:stability_function,eq:lte_residuals} have been written in a scalar form but are rational matrix functions of $Z$.

We can expand \cref{eq:lte_def:series} further by expressing it as a multivariate series in $h$ and $Z$:
\begin{equation} \label{eq:lte_series}
	\lte_n = \sum_{k=0}^{p} \sum_{\ell=0}^{\infty} w_{k,\ell} \frac{h^k Z^\ell}{k!} y^{(k)}(t_n) + Q_n.
\end{equation}
The coefficients $w_{k,\ell}$ are found by taking a Maclaurin series of $W_k(z)$:
\begin{equation} \label{eq:residual_coeffs}
	w_{k,\ell} = \begin{cases}
		0, & k = 0, \ell = 0, \\
		\b\comp{2}* \one{s\comp{2}} - \b\comp{1}* \one{s\comp{1}}, & k = 0, \ell = 1, \\
		\b\comp{1}* \left( \A\comp{1,1} \right)^{\ell-2} \left( \A\comp{1,2} \one{s\comp{2}} - \A\comp{1,1} \one{s\comp{1}} \right), & k = 0, \ell > 1, \\
		1 - k \b\comp{2}* \c\comp{2}[\times (k-1)], & k > 0, \ell = 0, \\
		\b\comp{2}* \c\comp{2}[\times k] - k \b\comp{1}* \A\comp{1,2} \c\comp{2}[\times (k-1)], & k > 0, \ell = 1, \\
		\b\comp{1}* \left( \A\comp{1,1} \right)^{\ell-2} \\
		\quad \cdot \left( \A\comp{1,2} \c\comp{2}[\times k] - k \A\comp{1,1} \A\comp{1,2} \c\comp{2}[\times (k-1)] \right), & k > 0, \ell > 1.
	\end{cases}
\end{equation}

For a nonstiff problem, that is, when $Z = \order{h}$ and $L$ is independent of $h$, we can simplify \cref{eq:lte_series} to
\begin{align*}
	\lte_n &= \sum_{k=0}^{p} \sum_{\ell=0}^{\infty} w_{k,\ell} h^{k+\ell} L^{\ell} y^{(k)}(t_n) + Q_n \\
	&= \sum_{k=0}^{p} \left( \sum_{\ell=0}^{k} w_{\ell, k - \ell} L^{k-\ell} y^{(\ell)}(t_n) \right) h^k + Q_n.
\end{align*}
Note that the terms $L^{\ell} y^{(k)}(t_n)$ can be linearly independent for arbitrarily large values of $k$ and $\ell$.  In general, we must set $w_{k,\ell} = 0$ for $k + \ell \leq p$ to achieve $\lte_n = \order{h^{p+1}}$.  When $w_{k,\ell}$ are viewed as elements of an infinite-dimensional matrix, a triangular region must be zero.  More formally, the equivalence of nonstiff order conditions is summarized in the following diagram:
\begin{equation*}
	\begin{tikzcd}[arrows=Leftrightarrow,sep=large]
		\lte_n = \order{h^{p+1}} \arrow{r} \arrow{d} & w_{k,\ell} = 0 \text{ for } k, \ell \geq 0 \text{ and } k + \ell \leq p \arrow{d} \\
		\text{\cref{eq:nonstiff_order_conditions}} \arrow{r} & W_k(z) = \order{h^{p+1-k}} \text{ for } 0 \leq k \leq p
	\end{tikzcd}
\end{equation*}
As expected, we can recover the tree-based order conditions given in \cref{eq:nonstiff_order_conditions} from \cref{eq:lte_series}.

For stiff problems, however, $h$ and $Z$ can have a more complex relationship, and $W_k(z)$ are not necessarily bounded by powers of $h$. Consequently, more stringent order conditions are necessary: we have to completely cancel out $W_k(z)$ for $k = 0, \dots, p$.  Using the infinite-dimensional matrix interpretation of $w_{k,\ell}$, we must set a rectangular region to zero instead of a triangular region.
\begin{theorem} \label{thm:zero_residual}
	$W_k(z) \equiv 0$ if and only if $w_{k,\ell} = 0$ for $\ell = 0, \dots, s\comp{1} + 1$.
\end{theorem}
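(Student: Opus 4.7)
The forward implication is immediate: by construction, $w_{k,\ell}$ is the coefficient of $z^\ell$ in the Maclaurin series of $W_k(z)$, so $W_k \equiv 0$ forces every $w_{k,\ell}$ to vanish, in particular for $\ell = 0, \dots, s\comp{1} + 1$.

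The substance lies in the converse, which I would reduce to a degree-counting argument on a rational-function representation of $W_k$. First, I would apply the cofactor formula
\begin{equation*}
	\left( \eye{s\comp{1}} - z \A\comp{1,1} \right)^{-1} = \frac{\operatorname{adj}\!\left(\eye{s\comp{1}} - z \A\comp{1,1}\right)}{D(z)},
	\qquad
	D(z) \coloneqq \det\!\left(\eye{s\comp{1}} - z \A\comp{1,1}\right),
\end{equation*}
observing that $D(z)$ is a polynomial of degree at most $s\comp{1}$ with $D(0) = 1$, and every entry of the adjugate is a polynomial of degree at most $s\comp{1} - 1$. Substituting this into \cref{eq:lte_residuals}, the left factor $\b\comp{2}* + z \b\comp{1}* (\eye{s\comp{1}} - z \A\comp{1,1})^{-1} \A\comp{1,2}$ becomes a row of rational functions with common denominator $D(z)$ and numerators of degree at most $s\comp{1}$. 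Multiplying on the right by the linear-in-$z$ diagonal matrix $z(\mathbf{C}\comp{2})^k - k(\mathbf{C}\comp{2})^{k-1}$, contracting against $\one{s\comp{2}}$, and folding the constant term $1$ into the numerator as $D(z)$, I would arrive at the representation $W_k(z) = N_k(z)/D(z)$ with $\deg N_k \leq s\comp{1} + 1$.

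With this structure in hand the finish is quick. Assume $w_{k,\ell} = 0$ for $\ell = 0, \dots, s\comp{1} + 1$. Since $D(0) = 1 \neq 0$, vanishing of the Maclaurin series of $W_k$ through order $s\comp{1} + 1$ is equivalent to $z^{s\comp{1}+2}$ dividing $N_k(z)$ in $\mathbb{C}[z]$. Combined with $\deg N_k \leq s\comp{1} + 1$, this forces $N_k \equiv 0$, and hence $W_k \equiv 0$. The main obstacle is the degree bookkeeping for $N_k$: one must verify carefully that the linear factor $z(\mathbf{C}\comp{2})^k - k(\mathbf{C}\comp{2})^{k-1}$ combines with the degree-$s\comp{1}$ numerator from the resolvent to yield degree at most $s\comp{1}+1$, and that adding the constant $1$ over $D(z)$ (itself of degree at most $s\comp{1}$) does not push the total numerator degree past this bound.
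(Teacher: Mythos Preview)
Your proposal is correct and follows essentially the same approach as the paper: both establish that $W_k(z)$ is a rational function with numerator degree at most $s\comp{1}+1$ and denominator nonvanishing at the origin, then conclude that the first $s\comp{1}+2$ Maclaurin coefficients determine the numerator. You are more explicit than the paper about \emph{why} the degree bounds hold (via the adjugate formula), whereas the paper simply asserts the rational form \cref{eq:w_rational} and then reads off the numerator coefficients through the Cauchy product $n_{k,i} = \sum_{j} w_{k,i-j}\, d_{k,j}$; your divisibility phrasing is an equivalent way to finish.
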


\begin{proof}
	Note that we can express the local error coefficients \cref{eq:lte_residuals} as
	\begin{equation} \label{eq:w_rational}
		W_k(z) = \frac{n_{k,0} + n_{k,1} z + \dots + n_{k,s\comp{1} + 1} z^{s\comp{1} + 1}}{d_{k,0} + d_{k,1} z + \dots + d_{k,s\comp{1}} z^{s\comp{1}}} = \sum_{\ell=0}^{\infty} w_{k,\ell} z^{\ell},
	\end{equation}
	with $d_{k,0} \neq 0$.
	
	($\Leftarrow$) Suppose that $w_{k,\ell} = 0$ for $\ell = 0, \dots, s\comp{1} + 1$.  From \cref{eq:w_rational}, $n_{k,i} = \sum_{j=0}^{\min(i, s\comp{1})} w_{k,i-j} d_{k,j} = 0$ for $i = 0, \dots, s\comp{1}+1$.  Thus, $W_k(z) \equiv 0$.
	
	($\Rightarrow$) For the order direction of the proof, it is clear that if $W_k(z) \equiv 0$, the Maclaurin series coefficients $w_{k,\ell} = 0$ for $\ell \geq 0$.  
\end{proof}


Following the idea of Prothero and Robinson, we can also examine the behavior of local truncation error \cref{eq:lte_def} when $Z \to -\infty$.  In this limit, we cannot rely on the power series expansion in $Z$ used in \cref{eq:lte_series}; instead, we consider a Laurent series in $Z$:
\begin{equation*}
	\lte_n = \sum_{k=0}^{p} \sum_{\ell=-1}^{\infty} x_{k,\ell} \frac{h^k Z^{-\ell}}{k!} y^{(k)}(t_n) + Q_n.
\end{equation*}
To ensure this series exists, we require $\b\comp{1}*$ to be in the rowspace of $\A\comp{1,1}$ and any zero eigenvalues of $\A\comp{1,1}$ to be regular.  Thus, there exists a $\mathbf{v} \in \R{s\comp{1}}$ such that $\mathbf{v}^T \A\comp{1,1} = \b\comp{1}*$, and $\A\comp{1,1}$ has a Jordan decomposition of the form $\A\comp{1,1} = \mathbf{S}^{-1} \begin{bsmallmatrix} \boldsymbol{\Lambda} & 0 \\ 0 & 0 \end{bsmallmatrix} \mathbf{S}$, where $\mathbf{\Lambda}$ contains the Jordan blocks for nonzero eigenvalues.  As an intermediate step in the expansion of local error coefficients \cref{eq:lte_residuals}, note that
\begin{equation} \label{eq:intermediate_series}
	\begin{split}
		z \b\comp{1}* \big( \eye{s\comp{1}} - z \A\comp{1,1} \big)^{-1}
		&= \mathbf{v}^T \mathbf{S}^{-1} \begin{bmatrix}
			z \boldsymbol{\Lambda} \left( \eye - z \boldsymbol{\Lambda} \right)^{-1} & 0 \\
			0 & 0 \cdot \eye
		\end{bmatrix} \mathbf{S} \\
		&= z \b\comp{1}* \mathbf{S}^{-1} \begin{bmatrix}
		-\sum_{\ell=1}^{\infty} \left(z \boldsymbol{\Lambda} \right)^{-\ell} & 0 \\
		0 & 0
		\end{bmatrix} \mathbf{S} \\
		&= -\b\comp{1}* \sum_{\ell=0}^{\infty} z^{-\ell} \boldsymbol{\Omega}^{\ell+1},
	\end{split}
\end{equation}
where $\boldsymbol{\Omega} = \left( \A\comp{1,1} \right)^D$ is the Drazin inverse \cite{drazin1958pseudo} of $\A\comp{1,1}$.  In particular, $\boldsymbol{\Omega} = \left( \A\comp{1,1} \right)^{-1}$ when $\A\comp{1,1}$ is invertible, but the Drazin inverse also accounts for methods with explicit stages.  Substituting \cref{eq:intermediate_series} into \cref{eq:lte_residuals} yields:
\begin{equation} \label{eq:laurent_coeffs}
	x_{k,\ell} = \begin{cases}
		\b\comp{1}* \boldsymbol{\Omega}^{\ell+2} \left( \A\comp{1,1} \one{s\comp{1}} - \A\comp{1,2} \one{s\comp{2}} \right), & k = 0, \ell \geq 0, \\
		\left( \b\comp{2}* - \b\comp{1}* \boldsymbol{\Omega} \A\comp{1,2} \right) \c\comp{2}[\times k], & k \geq 0, \ell = -1, \\
		1 - k \left( \b\comp{2}* - \b\comp{1}* \boldsymbol{\Omega} \A\comp{1,2} \right) \c\comp{2}[\times (k-1)] \\
		\quad - \b\comp{1}* \boldsymbol{\Omega}^2 \A\comp{1,2} \c\comp{2}[\times k], & k > 0, \ell = 0, \\
		\b\comp{1}* \boldsymbol{\Omega}^{\ell+2} \left( \A\comp{1,2} \c\comp{2}[\times k] \right. \\
		\quad \left. - k \A\comp{1,1} \A\comp{1,2} \c\comp{2}[\times (k-1)] \right), & k > 0, \ell > 0.
	\end{cases}
\end{equation}
Unless $x_{k,-1} = 0$ for $k \geq 0$, $\lte_n$ diverges as $\abs{Z} \to \infty$.  \Cref{eq:laurent_coeffs} suggests the following sufficient condition to ensure $W_k(z)$ is bounded away from its poles:
\begin{equation} \label{eq:bounded_simplifying_assumption}
	\b\comp{2}* = \b\comp{1}* \boldsymbol{\Omega} \A\comp{1,2}.
\end{equation}
If $\A\comp{1,1}$ is invertible and the GARK scheme \cref{eq:gark_simplified} is stiffly accurate \cite[Definition 3.3]{sandu2015generalized}, that is
\begin{equation}
	\unitvec{s\comp{1}}^T \A\comp{1,1} = \b\comp{1}*
	\quad \text{and} \quad
	\unitvec{s\comp{1}}^T \A\comp{1,2} = \b\comp{2}*,
\end{equation}
then \cref{eq:bounded_simplifying_assumption} is automatically satisfied.

\ifreport
\subsection{Simplifying Assumptions}

Extensions of traditional Runge--Kutta simplifying assumptions \cref{eq:simplifying_assumptions} to the GARK framework have been proposed in \cite{tanner2018generalized}.  Quadrature simplifying assumptions are defined as
\begin{equation} \label{eq:GARK_B_simplify}
	B\comp{\sigma}(p): \quad \b\comp{\sigma}* \c\comp{\sigma}[\times (k-1)] = \frac{1}{k}, \quad k = 1, \dots, p.
\end{equation}
A method satisfying $B\comp{\sigma}(1)$ for all $\sigma$ is said to be consistent with \cref{eq:linear_inhomogeneous_ode}.  This condition is both necessary and sufficient for classical first order convergence.  The stage order simplifying assumption in \cref{eq:simplifying_assumptions:C} extends to
\begin{equation} \label{eq:GARK_C_simplify}
	C\comp{\sigma,\mu}(q): \quad \A\comp{\sigma,\mu} \c\comp{\mu}[\times (k-1)] = \frac{\c\comp{\sigma}[\times k]}{k}, \quad k = 1, \dots, q.
\end{equation}
The commonly-used internal consistency assumption \cite[Definition 2.3]{sandu2015generalized} is equivalent to $C\comp{\sigma,\mu}(1)$ for all $\sigma$ and $\mu$.

\begin{theorem} \label{thm:simplifying_assumptions}
	Suppose the GARK method \cref{eq:gark_simplified} has coefficients satisfying the simplifying assumptions $B\comp{\sigma}(p)$ and $C\comp{1,\sigma}(q)$ for $\sigma = 1, 2$.  Then $W_k(z) \equiv 0$ for $k = 0, \dots, \min(p, q) - 1$.
\end{theorem}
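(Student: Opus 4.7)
The plan is to verify $W_k(z) \equiv 0$ by direct computation from \cref{eq:lte_residuals}, handling $k = 0$ and $k \geq 1$ separately. For $k = 0$, both contributions vanish immediately: $B\comp{1}(1)$ and $B\comp{2}(1)$ give $\b\comp{1}* \one{s\comp{1}} = \b\comp{2}* \one{s\comp{2}} = 1$, which kills the linear-in-$z$ term, while $C\comp{1,1}(1)$ and $C\comp{1,2}(1)$ give $\A\comp{1,1} \one{s\comp{1}} = \A\comp{1,2} \one{s\comp{2}} = \c\comp{1}$, which kills the $z^2$ term. This disposes of the base case whenever $\min(p, q) \geq 1$.

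For $k \geq 1$ with $k \leq \min(p, q) - 1$, I would use $W_k(z) = 1 + \widetilde{W}_k(z) \one{s\comp{2}}$ together with the identity $\big( \mathbf{C}\comp{2} \big)^j \one{s\comp{2}} = \c\comp{2}[\times j]$ to put $\widetilde{W}_k(z) \one{s\comp{2}}$ in the form
\begin{equation*}
\big( \b\comp{2}* + z\, \b\comp{1}* (\eye{s\comp{1}} - z \A\comp{1,1})^{-1} \A\comp{1,2} \big) \big( z \c\comp{2}[\times k] - k \c\comp{2}[\times (k-1)] \big).
\end{equation*}
The key then is a two-step cancellation inside the resolvent. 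First, since $k + 1 \leq q$, $C\comp{1,2}(q)$ applied to both $\c\comp{2}[\times (k-1)]$ and $\c\comp{2}[\times k]$ converts $\A\comp{1,2} \big( z \c\comp{2}[\times k] - k \c\comp{2}[\times (k-1)] \big)$ into $\frac{z}{k+1} \c\comp{1}[\times (k+1)] - \c\comp{1}[\times k]$. Second, $C\comp{1,1}(q)$ gives the identity $\c\comp{1}[\times (k+1)] = (k+1) \A\comp{1,1} \c\comp{1}[\times k]$, which rewrites that bracket as $-(\eye{s\comp{1}} - z \A\comp{1,1}) \c\comp{1}[\times k]$. That factor cancels exactly against the resolvent $(\eye{s\comp{1}} - z \A\comp{1,1})^{-1}$, collapsing the second term into the polynomial $-z\, \b\comp{1}* \c\comp{1}[\times k]$.

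Finally I would apply $B\comp{1}(p)$ and $B\comp{2}(p)$ (both valid since $k + 1 \leq p$) to the remaining scalar quadrature sums. The first term contributes $z/(k+1) - 1$, the second contributes $-z/(k+1)$, and they sum to $-1$; thus $\widetilde{W}_k(z) \one{s\comp{2}} = -1$ and $W_k(z) \equiv 0$.

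The main obstacle is really just bookkeeping: every invocation of a simplifying assumption must be checked against its index range, and the constraints $k \leq p - 1$ forced by the $B$ conditions and $k \leq q - 1$ forced by the $C$ conditions combine to give precisely $k \leq \min(p, q) - 1$. The one genuinely conceptual point is the ``resolvent collapse'' produced by combining $C\comp{1,2}(q)$ with $C\comp{1,1}(q)$; this is exactly what makes the conclusion $W_k(z) \equiv 0$ a statement about the rational function rather than just its low-order Maclaurin coefficients, and is therefore what lets the stiff error coefficients vanish identically in $z$.
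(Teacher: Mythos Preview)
Your proof is correct and takes a genuinely different route from the paper. The paper expands $W_k(z)$ as a Maclaurin series in $z$ (via the coefficients $w_{k,\ell}$ of \cref{eq:residual_coeffs}) and then shows, in two passes, that every coefficient vanishes: first $C\comp{1,2}(q)$ with index $k$ simplifies each term into a difference of $\c\comp{1}$- and $\c\comp{2}$-moments, and then $C\comp{1,\sigma}(q)$ with index $k+1$ together with $B\comp{\sigma}(p)$ force those differences to zero. You instead work directly with the closed rational form of $W_k(z)$ and observe that $C\comp{1,2}(q)$ followed by $C\comp{1,1}(q)$ converts $\A\comp{1,2}\big(z\,\c\comp{2}[\times k] - k\,\c\comp{2}[\times(k-1)]\big)$ into $-(\eye{s\comp{1}} - z\A\comp{1,1})\,\c\comp{1}[\times k]$, which cancels the resolvent exactly and reduces everything to a scalar quadrature identity. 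Your argument is arguably cleaner: it never invokes a power-series expansion (so there is no implicit appeal to analyticity or to \cref{thm:zero_residual} to pass from ``all Maclaurin coefficients vanish'' to ``identically zero''), and the resolvent-collapse step makes transparent \emph{why} the cancellation is exact in $z$. The paper's series approach, on the other hand, ties more directly into the coefficient table \cref{eq:residual_coeffs} that drives the rest of the order-condition analysis.
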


\begin{proof}
	Assume $B\comp{\sigma}(p)$ and $C\comp{1,\sigma}(q)$ hold for $\sigma = 1, 2$.  Since $W_0(z)$ has a different form than the other residuals in \cref{eq:lte_residuals}, we will treat is separately.  One can easily verify that $W_0(z) \equiv 0$ when $p,q \geq 1$.  For $k = 1, \dots, \min(p, q) - 1$ we have
	\begin{align*}
		W_k(z) &= \left( 1 - k \b\comp{2}* \c\comp{2}[\times (k-1)] \right) + \left( \b\comp{2}* \c\comp{2}[\times k] - k \b\comp{1}* \A\comp{1,2} \c\comp{2}[\times (k-1)] \right) z \\
		& \quad + \sum_{\ell = 2}^{\infty} \b\comp{1}* \left( \A\comp{1,1} \right)^{\ell-2} \left( \A\comp{1,2} \c\comp{2}[\times k] - k \A\comp{1,1} \A\comp{1,2} \c\comp{2}[\times (k-1)] \right) z^{\ell} \\
		&= \left( \b\comp{2}* \c\comp{2}[\times k] - \b\comp{1}* \c\comp{1}[\times k] \right) z \\
		& \quad + \sum_{\ell = 2}^{\infty} \b\comp{1}* \left( \A\comp{1,1} \right)^{\ell-2} \left( \A\comp{1,2} \c\comp{2}[\times k] - \A\comp{1,1} \c\comp{1}[\times k] \right) z^{\ell} \\
		&= 0.
	\end{align*}
	by \cref{eq:GARK_B_simplify,eq:GARK_C_simplify}.
\end{proof}

The result in \cref{thm:simplifying_assumptions} is slightly weaker than what can be achieved with unpartitioned Runge--Kutta methods.  When we cast a Runge--Kutta method as a GARK method, the result in \cref{thm:simplifying_assumptions} can be sharpened by one order, i.e., $W_{\min(p,q)}(z) \equiv 0$.  From \cref{thm:simplifying_assumptions}, we also see that the minimal conditions of consistency and internal consistency imply $W_0(z) \equiv 0$.

\fi

\subsection{Global Error and Convergence}

Following \cite[Section IV.15]{hairer1996solving}, the accumulation of local truncation errors into the global error $e_n$ is found by unrolling the error recurrence given in \cref{eq:err_recurrence}:
\begin{equation*}
	e_{n+1} = \left( R\comp{1}(Z) \right)^{n+1} e_0 + \sum_{j=0}^n \left( R\comp{1}(Z) \right)^{n-j} \lte_j.
\end{equation*}
\begin{theorem} \label{thm:global_err}
	 Let $\langle\cdot,\cdot\rangle$ denote an inner product on $\Cplx{\nvar}$ and $\norm{\cdot}$ denote the induced norm.  Assume the linear operator in \cref{eq:linear_inhomogeneous_ode} satisfies
	 \begin{equation} \label{eq:L_property}
	 \forall y: \quad \Re \langle y, L y \rangle \leq \mu \norm{y}^2, \qquad \mu \leq 0.
	 \end{equation}
	 If the GARK scheme \cref{eq:gark_simplified} has an A-stable base method and satisfies
	 \begin{subequations} \label{eq:order_conditions}
		 \begin{align}
		 	\label{eq:order_conditions:W}
		 	&0 = w_{k,\ell}, \text{ for } k = 0, \dots, p \text{ and } \ell = 0, \dots, s\comp{1} + 1, \\
		 	\label{eq:order_conditions:bounded}
		 	&\widetilde{W}_{p+1}(z) \text{ uniformly bounded over } \{ z \in \Cplx \, : \, \Re(z) \leq \mu \},
		 \end{align}
	 \end{subequations}
	 then there exists a positive constant $C$ such that for $t_f = t_0 + n h$ fixed, the global error is bounded by
	 \begin{equation} \label{eq:global_err}
	 	\norm{e_{n}} \leq C \max_{t \in T} \norm{y^{(p+1)}(t)} h^p,
	 \end{equation}
	 where $C$ is a constant depending only on $\mu$, the timespan, and the method coefficients.  The set $T$ is the timespan enlarged if abscissae lie outside the standard range:
	 \begin{equation*}
	 	T = \left[ t_0 + \min \mleft( 0, c\comp{2}_1, \dots, c\comp{2}_{s\comp{2}} \mright) h, t_f + \max \mleft( 0, c\comp{2}_1 - 1, \dots, c\comp{2}_{s\comp{2}} - 1 \mright) h \right].
	 \end{equation*}
\end{theorem}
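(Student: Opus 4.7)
The proof proceeds by propagating the local truncation errors through the linear recurrence \cref{eq:err_recurrence}, which has already been unrolled immediately above the theorem. The strategy decomposes into three stages: reduce $\lte_n$ to a single $\order{h^{p+1}}$ remainder by killing the residuals $W_k$, bound that remainder uniformly in $Z$, and exploit A-stability to prevent error amplification when summing over $\order{1/h}$ steps.

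First, I would invoke \cref{thm:zero_residual} to promote the combinatorial vanishing hypothesis \cref{eq:order_conditions:W} to the stronger statement that $W_k(z) \equiv 0$ as rational functions for $k = 0, \dots, p$. Substituting into \cref{eq:lte_def:series} collapses the local truncation error to
\begin{equation*}
	\lte_n = Q_n = \frac{h^{p+1}}{(p+1)!}\left( y^{(p+1)}(\xi_n) + \widetilde{W}_{p+1}(Z)\, y^{(p+1)}(\boldsymbol{\zeta}_n) \right),
\end{equation*}
with $\xi_n$ and every entry of $\boldsymbol{\zeta}_n$ lying in the enlarged interval $T$ by construction. The two vector arguments are controlled directly by $\max_{t \in T} \norm{y^{(p+1)}(t)}$; the matrix factor $\widetilde{W}_{p+1}(Z)$ is handled by combining hypothesis \cref{eq:order_conditions:bounded} with a functional-calculus argument that lifts the scalar uniform bound on $\{\Re(z) \leq \mu\}$ to an operator bound on $\widetilde{W}_{p+1}(hL)$, using \cref{eq:L_property} to place the numerical range of $hL$ in the required half-plane. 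This yields $\norm{\lte_n} \leq C_1\, h^{p+1}\,\max_{t \in T}\norm{y^{(p+1)}(t)}$.

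The same lifting, applied to the A-stability inequality $\abs{R\comp{1}(z)} \leq 1$ on the closed left half-plane, gives $\norm{R\comp{1}(Z)^k} \leq 1$ for every $k \geq 0$. Starting from $e_0 = 0$ and applying the triangle inequality to the unrolled recurrence yields
\begin{equation*}
	\norm{e_{n+1}} \leq \sum_{j=0}^n \norm{R\comp{1}(Z)^{n-j}}\, \norm{\lte_j} \leq (n+1)\, C_1\, h^{p+1}\, \max_{t \in T}\norm{y^{(p+1)}(t)},
\end{equation*}
and $(n+1)h \leq t_f - t_0 + h$ finishes the proof of \cref{eq:global_err}. The main obstacle is the passage from scalar bounds on $R\comp{1}(z)$ and $\widetilde{W}_{p+1}(z)$ to operator-norm bounds on their matrix counterparts: for normal $L$ this is immediate via the spectral theorem, but in full generality it requires a numerical-range argument (a Crouzeix-type inequality, or the classical observation that rational functions bounded on the field of values of a dissipative operator produce operators bounded in the induced norm). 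Once that technical lift is in hand, the rest of the argument is a routine telescoping sum; contractivity of $R\comp{1}(Z)$ eliminates any exponential amplification, so the usual discrete Gronwall step degenerates into the straightforward bound displayed above.
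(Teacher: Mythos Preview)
Your proposal is correct and follows essentially the same route as the paper: kill the $W_k$ via \cref{thm:zero_residual}, bound the remaining $Q_n$ using the uniform bound on $\widetilde{W}_{p+1}$, use A-stability for contractivity of $R\comp{1}(Z)$, and sum. The ``main obstacle'' you flag---lifting the scalar half-plane bounds to operator-norm bounds on $R\comp{1}(Z)$ and $\widetilde{W}_{p+1}(Z)$---is resolved in the paper by invoking a specific classical result (Theorem~4 of Hairer's 1982 stability paper), which is precisely the dissipative/numerical-range argument you sketch; no Crouzeix-type constant is needed because the rational functions are bounded on a half-plane containing the numerical range of $Z$.
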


\begin{remark}
	When \cref{eq:linear_inhomogeneous_ode} is stiff, the exact solution can have an initial phase of rapid exponential decay.  During this time, $y^{(p+1)}(t)$ in \cref{eq:global_err} can become disproportionally large.  This is a consideration not just for our GARK methods but for all B-convergent Runge--Kutta schemes as well.  Outside of the initial transient phase, the derivatives of $y$ can be bounded by a moderately-sized value.
\end{remark}

\begin{proof}
	By the assumptions of A-stability and \cref{eq:L_property}, we can apply Theorem 4 from \cite[Section 2]{hairer1982stability} to show
	\begin{equation*}
		\norm{R\comp{1}(Z)} \leq \sup_{\Re(z) \leq \mu} \abs{R\comp{1}(z)} \leq 1. \\
	\end{equation*}
	\Cref{eq:order_conditions:bounded} implies there must exist finite constants $\ell_i$ depending only on $\mu$ and method coefficients such that
	\begin{equation*}
		\norm{\widetilde{W}_{p+1,i}(Z)} \leq \sup_{\Re(z) \leq \mu} \abs{\widetilde{W}_{p+1,i}(z)} = \ell_i, \qquad i = 1, \dots, s\comp{2}.
	\end{equation*}
	With the help of \cref{eq:lte_def,thm:zero_residual}, \cref{eq:order_conditions:W} implies
	\begin{align*}
		\norm{\lte_n}
		&= \norm{y^{(p+1)}(\xi_n) + \widetilde{W}_{p+1}(Z) y^{(p+1)}(\boldsymbol{\zeta}_n)} \frac{h^{p+1}}{(p+1)!} \\
		&\leq \frac{1 + \sum_{i=1}^{s\comp{2}} \ell_i}{(p+1)!} \max_{t \in T} \norm{y^{(p+1)}(t)} h^{p+1}.
	\end{align*}
	Thus, the global error satisfies the inequality
	\begin{align*}
		\norm{e_{n}}
		&\leq \sum_{j=0}^{n-1} \norm{R\comp{1}(Z)}^{n-1-j} \norm{\lte_j} \\
		&\leq \sum_{j=0}^{n-1} \norm{\lte_j} \\
		&\leq \underbrace{\frac{t_f - t_0}{(p+1)!} \left( 1 + \sum_{i=1}^{s\comp{2}} \ell_i \right)}_{C} \max_{t \in T} y^{(p+1)}(t) h^p.
	\end{align*}
	Note that $C$ is a finite constant with the desired dependencies.
\end{proof}

\section{Numerical Schemes and Empirical Prothero--Robinson Convergence}
\label{sec:Order_Reduction:pr}

In this section, we will examine the error and convergence properties of singly diagonally implicit Runge--Kutta (SDIRK) methods applied to
\begin{equation} \label{eq:pr_cos}
	y' = -200 \big(y - \cos(t) \big) - \sin(t), \quad y(0) = 1, \quad t \in [0, 1].
\end{equation}
This is a special case of the PR test problem \cref{eq:pr} with $\lambda = -200$ and $\phi(t) = \cos(t)$.

\subsection{Order Two}
\label{sec:Order_Reduction:pr:SDIRK_2}

First, we will start with the popular, second order, L-stable SDIRK method \cite{alexander1977diagonally}
\begin{equation} \label{eq:SDIRK2}
	\begin{butchertableau}{c|cc}
		 \btentry{1} - \frac{1}{\sqrt{2}} &  \btentry{1} - \frac{1}{\sqrt{2}} & \btentry{0} \\
		\btentry{1} & \frac{1}{\sqrt{2}} &  \btentry{1} - \frac{1}{\sqrt{2}} \\ \hline
		& \frac{1}{\sqrt{2}} &  \btentry{1} - \frac{1}{\sqrt{2}}
	\end{butchertableau},
\end{equation}
which we will refer to as SDIRK2.  Substituting its coefficients into \cref{eq:lte_def,eq:lte_residuals} reveals that
\begin{equation} \label{eq:lte_SDIRK2}
	\lte_n = \frac{(4-3 \sqrt{2}) Z}{2 ((\sqrt{2}-2) Z+2)^2} h^2 y''(t_n) + \frac{(7-5 \sqrt{2}) Z-3 \sqrt{2}+4}{6 ((\sqrt{2}-2) Z+2)^2} h^3 y^{(3)}(t_n) + \cdots.
\end{equation}
For a nonstiff ODE, $Z = \order{h}$, and $\lte_n = \order{h^3}$.  If we take $Z \to -\infty$ the differential equation becomes an algebraic equation and $\lte_n = 0$.  Between these extremes, there are ``moderately stiff'' problems for which the leading term of \cref{eq:lte_SDIRK2} can cause order reduction.

In order to eliminate this problematic second order error, we extend SDIRK2 to a GARK method \cref{eq:gark_simplified} such that $W_k(z) \equiv 0$ for $k = 0, 1, 2$.  This introduces the new coefficients $\A\comp{1,2}$, $\b\comp{2}$, and $\c\comp{2}$.  We make the somewhat arbitrary choice $\c\comp{2} = [0,\frac{1}{2},1]$.  We impose the stiff accuracy property  $\b\comp{2}* = \unitvec{s\comp{1}}^T \A\comp{1,2}$.  Using \cref{thm:zero_residual}, the unspecified coefficients in $\A\comp{1,2}$ are uniquely determined by the order conditions
\begin{equation*}
	w_{k,\ell} = 0, \text{ for } k = 0, 1, 2 \text{ and } \ell = 0, 1, 2, 3.
\end{equation*}
Solving them, we arrive at the following method with tableau \cref{eq:tableau_simplified}:
\begin{equation} \label{eq:SDIGARK2}
	\begin{butchertableau}{cc|ccc}
		 \btentry{1} - \frac{1}{\sqrt{2}} & \btentry{1} & \btentry{0} & \frac{1}{2} & \btentry{1} \\ \hline
		 \btentry{1} - \frac{1}{\sqrt{2}} & \btentry{0} & \frac{13}{2}-\frac{9}{\sqrt{2}} & \btentry{10 \sqrt{2}-14} & \frac{17}{2}-\btentry{6 \sqrt{2}} \\
		\frac{1}{\sqrt{2}} &  \btentry{1} - \frac{1}{\sqrt{2}} &  \btentry{2\sqrt{2} }-\frac{5}{2} &  \btentry{6-4 \sqrt{2} } &  \btentry{2 \sqrt{2} }-\frac{5}{2} \\ \hline
		\frac{1}{\sqrt{2}} &  \btentry{1} - \frac{1}{\sqrt{2}} & \btentry{2 \sqrt{2} } -\frac{5}{2} &  \btentry{6-4 \sqrt{2} } &  \btentry{2\sqrt{2} }-\frac{5}{2}
	\end{butchertableau},
\end{equation}
We name this scheme SDIGARK2, and it has
\begin{equation*}
	\lte_n = \frac{(3- 2 \sqrt{2}) Z-12 \sqrt{2} +16}{6 ((\sqrt{2}-2) Z+2)^2} h^3 y^{(3)}(t_n) + \cdots.
\end{equation*}
\Cref{fig:pr_convergence_SDIRK2} shows numerical results of SDIRK2 and SDIGARK2 when applied to \cref{eq:pr_cos}.  We can see that SDIRK2 suffers from order reduction, while SDIGARK2 maintains an order of convergence of at least two.

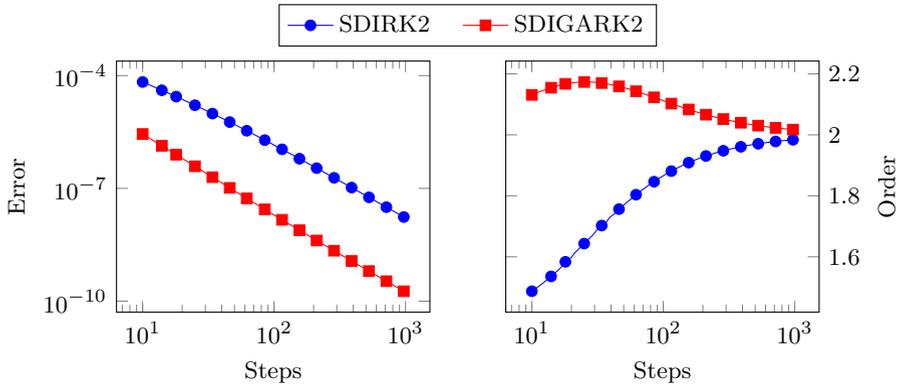
\begin{figure}[ht!]
	\centering
	\begin{tikzpicture}
		\begin{groupplot}[group style={group size=2 by 1},xlabel={Steps},width=0.48\textwidth]
			\nextgroupplot[xmode=log,ymode=log,ylabel={Error},mark repeat={4},legend to name=leg:pr_SDIRK2]
			\addplot table[x=Steps,y=SDIRK22 Error,col sep=comma]{\datadir/pr.csv}; \addlegendentry{SDIRK2}
			\addplot table[x=Steps,y=SDIGARK22a Error,col sep=comma]{\datadir/pr.csv}; \addlegendentry{SDIGARK2}
			\coordinate (c1) at (rel axis cs:0,1);
			\nextgroupplot[xmode=log,ylabel={Order},mark repeat={4},yticklabel pos=right]
			\addplot table[x=Steps,y=SDIRK22 Order,col sep=comma]{\datadir/pr.csv};
			\addplot table[x=Steps,y=SDIGARK22a Order,col sep=comma]{\datadir/pr.csv};
			\coordinate (c2) at (rel axis cs:1,1);
		\end{groupplot}
		\coordinate (c3) at ($(c1)!.5!(c2)$);
		\node[above] at (c3 |- current bounding box.north) {\pgfplotslegendfromname{leg:pr_SDIRK2}};
	\end{tikzpicture}
	\caption{Convergence and order for the methods \cref{eq:SDIRK2,eq:SDIGARK2} when applied to the PR problem \cref{eq:pr_cos}.}
	\label{fig:pr_convergence_SDIRK2}
\end{figure}

\subsection{Order Three}
\label{sec:Order_Reduction:pr:SDIRK_3}

In contrast to \cref{eq:SDIRK2}, the third order method we consider next is neither L-stable nor stiffly accurate.  The method is named SDIRK3 and has the tableau \cite{norsett1974semi}
\begin{equation} \label{eq:SDIRK3}
	\begin{butchertableau}{c|cc}
		\frac{\sqrt{3}+3}{6} & \frac{\sqrt{3}+3}{6} & \btentry{0} \\
		\frac{3-\sqrt{3}}{6} & -\frac{1}{\sqrt{3}} & \frac{\sqrt{3}+3}{6} \\ \hline
		& \frac{1}{2} & \frac{1}{2}
	\end{butchertableau}.
\end{equation}
With a local truncation error of
\begin{equation*}
	\lte_n = \frac{(2 \sqrt{3}+3) Z^2}{2 ((\sqrt{3}+3) Z-6)^2} h^2 y''(t_n) + \frac{(3 \sqrt{3}+5) Z^2}{6 ((\sqrt{3}+3) Z-6)^2} h^3 y^{(3)}(t_n) + \cdots,
\end{equation*}
order reduction is expected outside of the $Z = \order{h}$ regime.  We derive a GARK version of \cref{eq:SDIRK3} following a similar methodology to the one used in \cref{sec:Order_Reduction:pr:SDIRK_2}, but select $\c\comp{2} = \begin{bmatrix}-2 & -1 & 0 & 1\end{bmatrix}$.  For constant stepsizes, this choice only requires one evaluation of $g$ per step because $g(t_n - 2h), \dots, g(t_n)$ were already computed in previous steps.  One can view this as treating the linear term of \cref{eq:linear_inhomogeneous_ode} with SDIRK3 and the forcing term with a linear multistep method.  Our new method SDIGARK3a, given by
\begin{equation} \label{eq:SDIGARK3}
	\begin{butchertableau}{cc|cccc}
		\frac{\sqrt{3}+3}{6} & \frac{3-\sqrt{3}}{6} & \btentry{-2} & \btentry{-1} & \btentry{0} & \btentry{1} \\ \hline
		\frac{\sqrt{3}+3}{6} & \btentry{0} & \frac{-3 \sqrt{3}-5}{36} & \frac{11 \sqrt{3}+18}{36} & \frac{-13 \sqrt{3}-15}{36} & \frac{11 \sqrt{3}+20}{36} \\
		-\frac{1}{\sqrt{3}} & \frac{\sqrt{3}+3}{6} & \frac{7 \sqrt{3}+13}{36} & \frac{-25 \sqrt{3}-48}{36} & \frac{29 \sqrt{3}+75}{36} & \frac{-17 \sqrt{3}-22}{36} \\ \hline
		\frac{1}{2} & \frac{1}{2} & \frac{\sqrt{3}+3}{36} & \frac{-\sqrt{3}-4}{12} & \frac{\sqrt{3}+11}{12} & \frac{12-\sqrt{3}}{36}
	\end{butchertableau},
\end{equation}
has $W_k(z) \equiv 0$ for $k = 0, \dots, 3$ so that
\begin{equation*}
	\lte_n = \frac{\left(2 \sqrt{3}+5\right) Z+2 \sqrt{3}+3}{2 \left(\left(\sqrt{3}+3\right) Z-6\right)^2} h^4 y^{(4)}(t_n) + \cdots.
\end{equation*}
\Cref{fig:pr_convergence_SDIRK3} plots the errors produced by various third order schemes when integrating \cref{eq:pr_cos}.  It confirms order reduction for SDIRK3, and interestingly, the convergence line for SDIGARK3a has a cusp around $250$ steps.  While the error is still consistent with the bounds from \cref{thm:global_err}, the instantaneous order of convergence dips below three following the cusp.  This occurs because $W_4(z)$ has a root at $z \approx -0.7637$, and around this point, the leading error term no longer dominates the local truncation error.  We note SDIGARK2 did not have this behavior because its root of $W_3(z)$ is positive.

\begin{figure}[ht!]
	\centering
	\begin{tikzpicture}
		\begin{groupplot}[group style={group size=2 by 1},xlabel={Steps},width=0.48\textwidth]
			\nextgroupplot[xmode=log,ymode=log,ylabel={Error},mark repeat={4},legend to name=leg:pr_SDIRK3]
			\addplot table[x=Steps,y=SDIRK32 Error,col sep=comma]{\datadir/pr.csv}; \addlegendentry{SDIRK3}
			\addplot table[x=Steps,y=SDIGARK32a Error,col sep=comma]{\datadir/pr.csv}; \addlegendentry{SDIGARK3a}
			\addplot table[x=Steps,y=SDIGARK32b Error,col sep=comma]{\datadir/pr.csv}; \addlegendentry{SDIGARK3b}
			\addplot table[x=Steps,y=WSO33 Error,col sep=comma]{\datadir/pr.csv}; \addlegendentry{WSO3 \cref{eq:DIRK3_WSO3}}
			\draw[-latex] (axis cs:8e1,1e-12) node[left]{\small{$w_4(Z) = 0$}} to (axis cs:2.3e2,1.6e-13);
			\coordinate (c1) at (rel axis cs:0,1);
			\nextgroupplot[xmode=log,ylabel={Order},mark repeat={4},ymin=1,ymax=5,yticklabel pos=right]
			\addplot table[x=Steps,y=SDIRK32 Order,col sep=comma]{\datadir/pr.csv};
			\addplot table[x=Steps,y=SDIGARK32a Order,col sep=comma]{\datadir/pr.csv};
			\addplot table[x=Steps,y=SDIGARK32b Order,col sep=comma]{\datadir/pr.csv};
			\addplot table[x=Steps,y=WSO33 Order,col sep=comma]{\datadir/pr.csv};
			\coordinate (c2) at (rel axis cs:1,1);
		\end{groupplot}
		\coordinate (c3) at ($(c1)!.5!(c2)$);
		\node[above] at (c3 |- current bounding box.north) {\pgfplotslegendfromname{leg:pr_SDIRK3}};
	\end{tikzpicture}
	\caption{Convergence and order for third order DIRK schemes applied to the PR problem \cref{eq:pr_cos}.}
	\label{fig:pr_convergence_SDIRK3}
\end{figure}
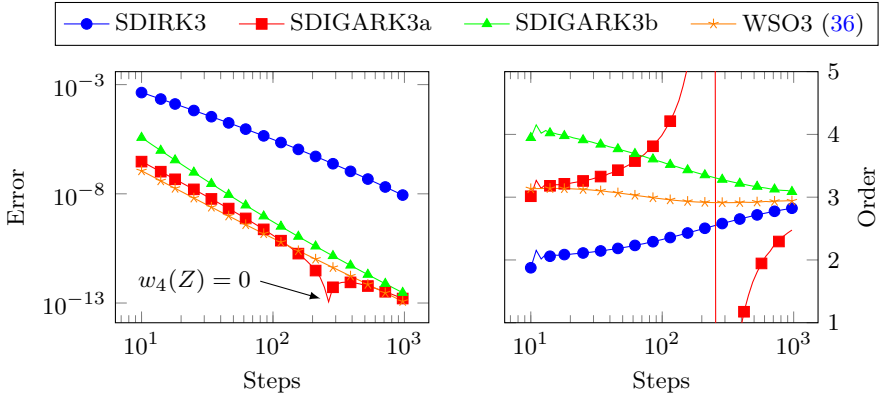

One way to avoid this behavior is by choosing coefficients such that the dominant error coefficient $W_4(z)$ is independent of $z$.  With an additional stage ($s\comp{2} = 5$), it is possible to enforce the additional constraint $w_{4,\ell} = 0$ for $\ell \geq 1$, and thus, $W_4(z) \equiv \frac{1}{24} - \frac{1}{6} \b\comp{2}* \c\comp{2}[\times 3]$.  Our updated method, SDIGARK3b, has a constant leading error term and the tableau
\begin{equation*}
	\begin{butchertableau}{cc|ccccc}
		\frac{\sqrt{3}+3}{6} & \frac{3-\sqrt{3}}{6} & \btentry{-3} & \btentry{-2} & \btentry{-1} & \btentry{0} & \btentry{1} \\ \hline
		\frac{\sqrt{3}+3}{6} & \btentry{0} & \frac{17 \sqrt{3}+29}{144} & \frac{-10 \sqrt{3}-17}{18} & \frac{73 \sqrt{3}+123}{72} & -\frac{11}{9}-\frac{5}{2 \sqrt{3}} & \frac{61 \sqrt{3}+109}{144} \\
		-\frac{1}{\sqrt{3}} & \frac{\sqrt{3}+3}{6} & \frac{-137 \sqrt{3}-243}{432} & \frac{79 \sqrt{3}+141}{54} & \frac{-187 \sqrt{3}-339}{72} & \frac{13}{3}+\frac{56}{9 \sqrt{3}} & \frac{-341 \sqrt{3}-507}{432} \\ \hline
		\frac{1}{2} & \frac{1}{2} & \frac{-5 (\sqrt{3}+2)}{72} & \frac{11 \sqrt{3}+23}{36} & \frac{-3 \sqrt{3}-7}{6} & \frac{13 \sqrt{3}+53}{36} & \frac{-7 (\sqrt{3}-2)}{72}
	\end{butchertableau}.
\end{equation*}
It maintains an order of at least three in \cref{fig:pr_convergence_SDIRK3}.  Also included in the figure is the convergence results for the WSO3 Runge--Kutta method
\begin{equation} \label{eq:DIRK3_WSO3}
	\begin{butchertableau}{c|cccc}
		\btentry{0.13756543551} & \btentry{0.13756543551} & \btentry{0} & \btentry{0} & \btentry{0} \\
		\btentry{0.80179011576} & \btentry{0.56695122794} & \btentry{0.23483888782} & \btentry{0} & \btentry{0} \\
		\btentry{2.33179673002} & \btentry{-1.08354072813} & \btentry{2.96618223864} & \btentry{0.44915521951} & \btentry{0} \\
		\btentry{0.59761291500} & \btentry{0.59761291500} & \btentry{-0.43420997584} & \btentry{-0.05305815322} & \btentry{0.88965521406} \\ \hline
		& \btentry{0.59761291500} & \btentry{-0.43420997584} & \btentry{-0.05305815322} & \btentry{0.88965521406}
	\end{butchertableau}.
\end{equation}
from \cite[page 458]{ketcheson2020dirk} which has order and weak stage order three.  While SDIGARK3a and SDIGARK3b have slightly larger errors than \cref{eq:DIRK3_WSO3}, they solve half as many linear systems and enjoy equal $\A\comp{1,1}_{i,i}$.

\section{Space-Time Convergence on a Hyperbolic PDE}
\label{sec:Order_Reduction:advection}

For a second numerical experiment, we will solve the following PDE used in \cite{sanz1986convergence}:
\begin{equation} \label{eq:advection_PDE}
	\begin{alignedat}{3}
		\pdv{u}{t} &= -\pdv{u}{x} + \frac{t - x}{(1 + t)^2}, \qquad & x, t &\in [0, 1], \\
		u(t, 0) &= \frac{1}{1 + t}, \qquad & t &\in [0, 1], \\
		u(0, x) &= 1 + x, \qquad & x &\in [0, 1].
	\end{alignedat}
\end{equation}
It possesses the simple solution $u(t,x) = (1+x)/(1+t)$.  We discretize in space with a first order, upwind finite difference scheme on the uniform grid $x_i = i h$, where $i = 0, \dots, \nvar$ and $h = \frac{1}{d}$.  This discretization is exact because the true solution is linear in space.  Note that $h$ is used as both the spatial grid size and the timestep in \cref{eq:gark_simplified}.  The semidiscretized form of \cref{eq:advection_PDE} is
\begin{equation} \label{eq:advection_ODE}
	y' = \begin{bmatrix}
		-\frac{1}{h} \\
		\frac{1}{h} & -\frac{1}{h} \\
		& \ddots & \ddots \\
		& & \frac{1}{h} & -\frac{1}{h}
	\end{bmatrix}
	y + \begin{bmatrix}
		\frac{t - x_1}{(1 + t)^2} + \frac{1}{h} \frac{1}{1 + t} \\
		\frac{t - x_2}{(1 + t)^2} \\
		\vdots \\
		\frac{t - x_{\nvar}}{(1 + t)^2}
	\end{bmatrix} \in \R{\nvar},
\end{equation}
and is of the form \cref{eq:linear_inhomogeneous_ode}.  We will examine the convergence as space and time are simultaneously refined.  We report the global errors computed in the $\ell^{\infty}$ norm at the final timestep: $\norm{e_d}_{\infty}$.

For the time discretization, we use the classical fourth order Runge--Kutta method (RK4) \cite{kutta1901beitrag}
\begin{equation} \label{eq:rk4}
	\begin{butchertableau}{c|cccc}
		\btentry{0} & \btentry{0} & \btentry{0} & \btentry{0} & \btentry{0} \\
		\frac{1}{2} & \frac{1}{2} & \btentry{0} & \btentry{0} & \btentry{0} \\
		\frac{1}{2} & \btentry{0} & \frac{1}{2} & \btentry{0} & \btentry{0} \\
		\btentry{1} & \btentry{0} & \btentry{0} & \btentry{1} & \btentry{0} \\ \hline
		& \frac{1}{6} & \frac{1}{3} & \frac{1}{3} & \frac{1}{6}
	\end{butchertableau},
\end{equation}
which by \cref{eq:lte_def} has the local truncation error
\begin{align*}
	\lte_n &= \frac{Z^3}{96} h^2 y''(t_n) + \frac{Z^3 - 2Z^2}{576} h^3 y^{(3)}(t_n) + \frac{Z^3-4 Z^2+8Z}{4608} h^4 y^{(4)}(t_n) \\
	& \quad + \frac{Z^3-6 Z^2+32 Z-16 \eye{\nvar}}{46080} h^5 y^{(5)}(t_n) + \order{Z^3 h^6}.
\end{align*}
If $Z = \order{h}$, we recover $\lte_n = \order{h^5}$ as expected.  For \cref{eq:advection_ODE}, however, $Z = \order{1}$ and the local truncation error is only $\order{h^2}$.  Starting with \cref{eq:rk4} as the base method, we can construct a GARK method \cref{eq:gark_simplified} that satisfies
\begin{equation*}
	w_{k,\ell} = 0, \text{ for } k = 0, \dots, 4 \text{ and } \ell = 0, \dots, 5.
\end{equation*}
to avoid order reduction.  With $s\comp{2} = 5$ and abscissae like that of a linear multistep method, we uniquely arrive at the following method which we will refer to as GARK4:
\begin{equation} \label{eq:rk4_gark}
	\begin{butchertableau}{cccc|ccccc}
		\btentry{0} & \frac{1}{2} & \frac{1}{2} & \btentry{1} & \btentry{-3} & \btentry{-2} & \btentry{-1} & \btentry{0} & \btentry{1} \\ \hline
		\btentry{0} & \btentry{0} & \btentry{0} & \btentry{0} & \btentry{0} & \btentry{0} & \btentry{0} & \btentry{0} & \btentry{0} \\
		\frac{1}{2} & \btentry{0} & \btentry{0} & \btentry{0} & \btentry{0} & \btentry{0} & \btentry{0} & \frac{1}{2} & \btentry{0} \\
		\btentry{0} & \frac{1}{2} & \btentry{0} & \btentry{0} & -\frac{1}{48} & \frac{1}{8} & -\frac{3}{8} & \frac{17}{24} & \frac{1}{16} \\
		\btentry{0} & \btentry{0} & \btentry{1} & \btentry{0} & -\frac{1}{16} & \frac{1}{3} & -\frac{5}{8} & \btentry{1} & \frac{17}{48} \\ \hline
		\frac{1}{6} & \frac{1}{3} & \frac{1}{3} & \frac{1}{6} & -\frac{5}{144} & \frac{13}{72} & -\frac{5}{12} & \frac{67}{72} & \frac{49}{144} \\
	\end{butchertableau}.
\end{equation}
GARK4 has the local truncation error
\begin{equation*}
	\lte_n = \frac{3 Z^3+17 Z^2+41 Z+12 \eye{\nvar}}{1440} h^5 y^{(5)}(t_n) + \order{Z^3 h^6},
\end{equation*}
and therefore, should not exhibit order reduction when applied to \cref{eq:advection_ODE}.  Indeed, this is verified in the convergence results presented in \cref{fig:rk4_convergence}.

\begin{figure}[ht!]
	\centering
	\begin{tikzpicture}
		\begin{groupplot}[group style={group size=2 by 1},xlabel={$\nvar = 1 / h$},width=0.48\textwidth]
			\nextgroupplot[xmode=log,ymode=log,ylabel={Absolute $\ell^{\infty}$ Error},legend to name=leg:rk4]
			\addplot table[x=Steps,y=RK Error,col sep=comma]{\datadir/advection.csv}; \addlegendentry{RK4}
			\addplot table[x=Steps,y=GARK Error,col sep=comma]{\datadir/advection.csv}; \addlegendentry{GARK4}
			\coordinate (c1) at (rel axis cs:0,1);
			\nextgroupplot[xmode=log,ylabel={Order},yticklabel pos=right]
			\addplot table[x=Steps,y=RK Order,col sep=comma]{\datadir/advection.csv};
			\addplot table[x=Steps,y=GARK Order,col sep=comma]{\datadir/advection.csv};
			\coordinate (c2) at (rel axis cs:1,1);
		\end{groupplot}
		\coordinate (c3) at ($(c1)!.5!(c2)$);
		\node[above] at (c3 |- current bounding box.north) {\pgfplotslegendfromname{leg:rk4}};
	\end{tikzpicture}
	\caption{Convergence and order for the methods \cref{eq:rk4,eq:rk4_gark} when applied to the advection problem \cref{eq:advection_ODE}.}
	\label{fig:rk4_convergence}
\end{figure}
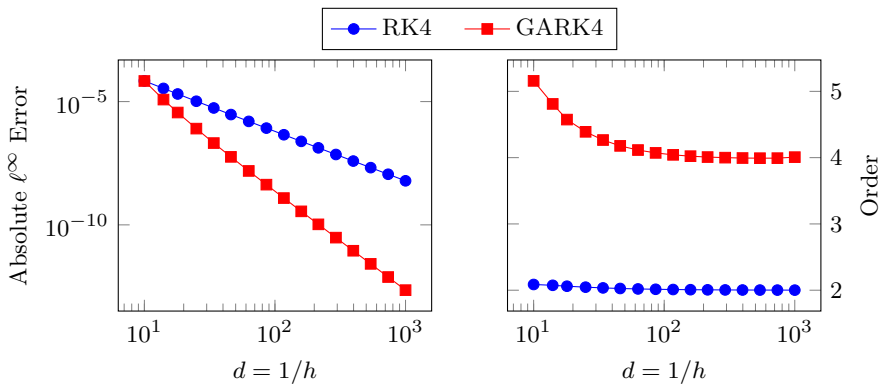

\section{Time-Dependent Heat Equation Experiment}
\label{sec:Order_Reduction:heat_eq}

Our final experiment models the transient dynamics of heat in an aluminum heat sink via the PDE
\begin{subequations} \label{eq:heat_pde}
	\begin{alignat}{2}
		{\pdv{u}{t}}(t,\mathbf{x}) &= \frac{k}{c_p \rho} \laplacian{u}(t,\mathbf{x}), & \qquad \mathbf{x} &\in \Omega \subset \R{3}, \quad t \in [0, t_f], \\
		u(t, \mathbf{x}) &= T_\infty \left( 1 + 0.1 \sin\mleft(\frac{\pi t}{2 t_f}\mright) \right), & \qquad \mathbf{x} &\in \partial \Omega_{\text{bottom}} \\
		{\pdv{u}{\mathbf{n}}}(t,\mathbf{x}) &= \frac{h_c}{k} \left( u(t,\mathbf{x}) - T_\infty \right), & \qquad \mathbf{x} &\in \partial \Omega \setminus \partial \Omega_{\text{bottom}}, \\
		u(0, \mathbf{x}) &= T_\infty, & \qquad \mathbf{x} &\in \Omega.
	\end{alignat}
\end{subequations}
The domain $\Omega$ and snapshots of the solution are plotted in \cref{fig:heat_eq}.  The bottom face of the heat sink, $\Omega_{\text{bottom}}$, is in contact with a CPU and has a temperature specified by a time-dependent, Dirichlet boundary condition.  All other faces are in contact with the air and have convective, Robin boundary conditions.  Finally, the model's parameters are listed in \cref{tab:heat_eq_params}.
\begin{figure}[ht!]
	\centering
	\begin{subfigure}{0.49\textwidth}
		\includegraphics[width=\textwidth]{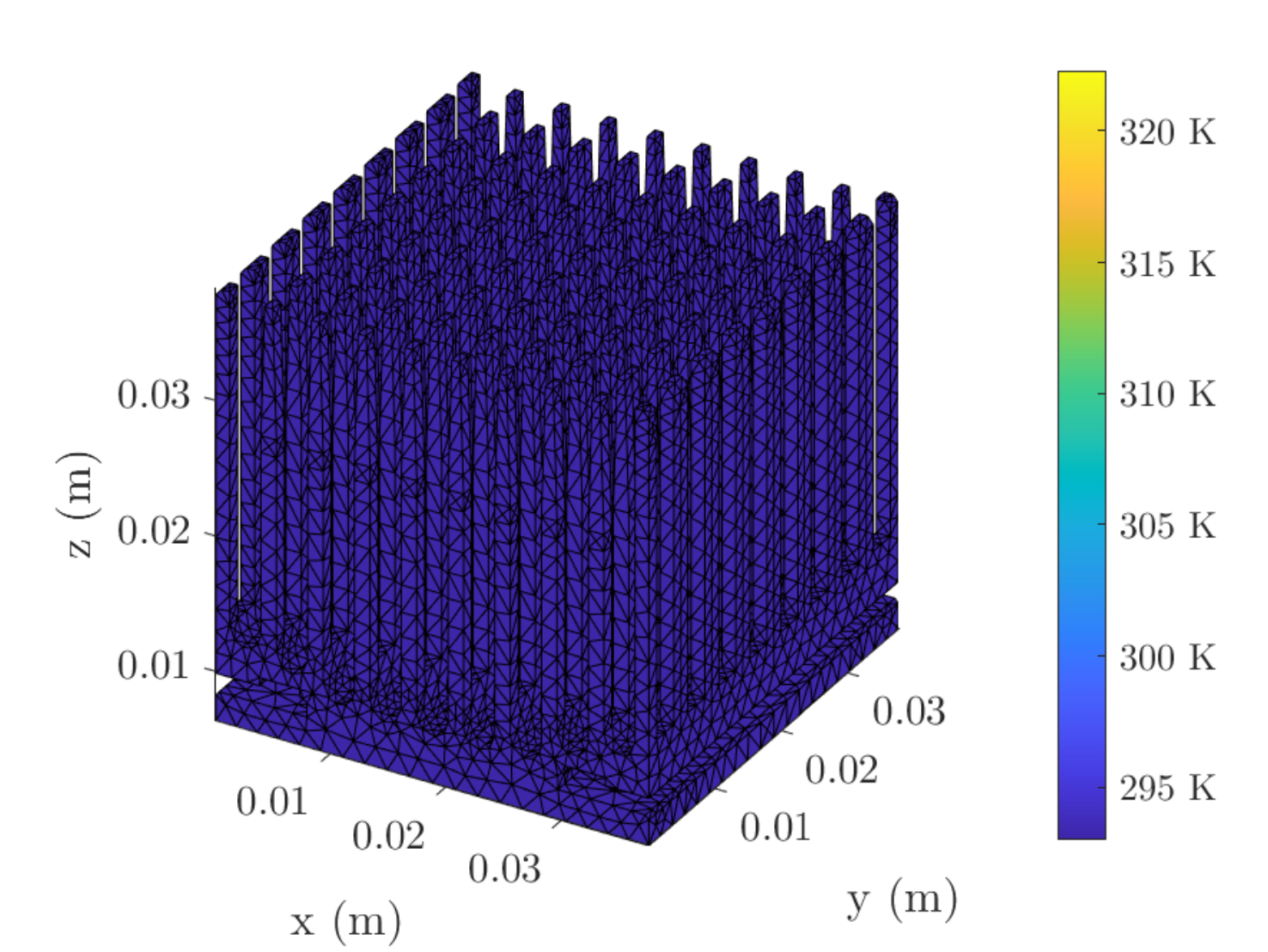}
		\caption{$t = 0$}
	\end{subfigure}
	\hfill
	\begin{subfigure}{0.49\textwidth}
		\includegraphics[width=\textwidth]{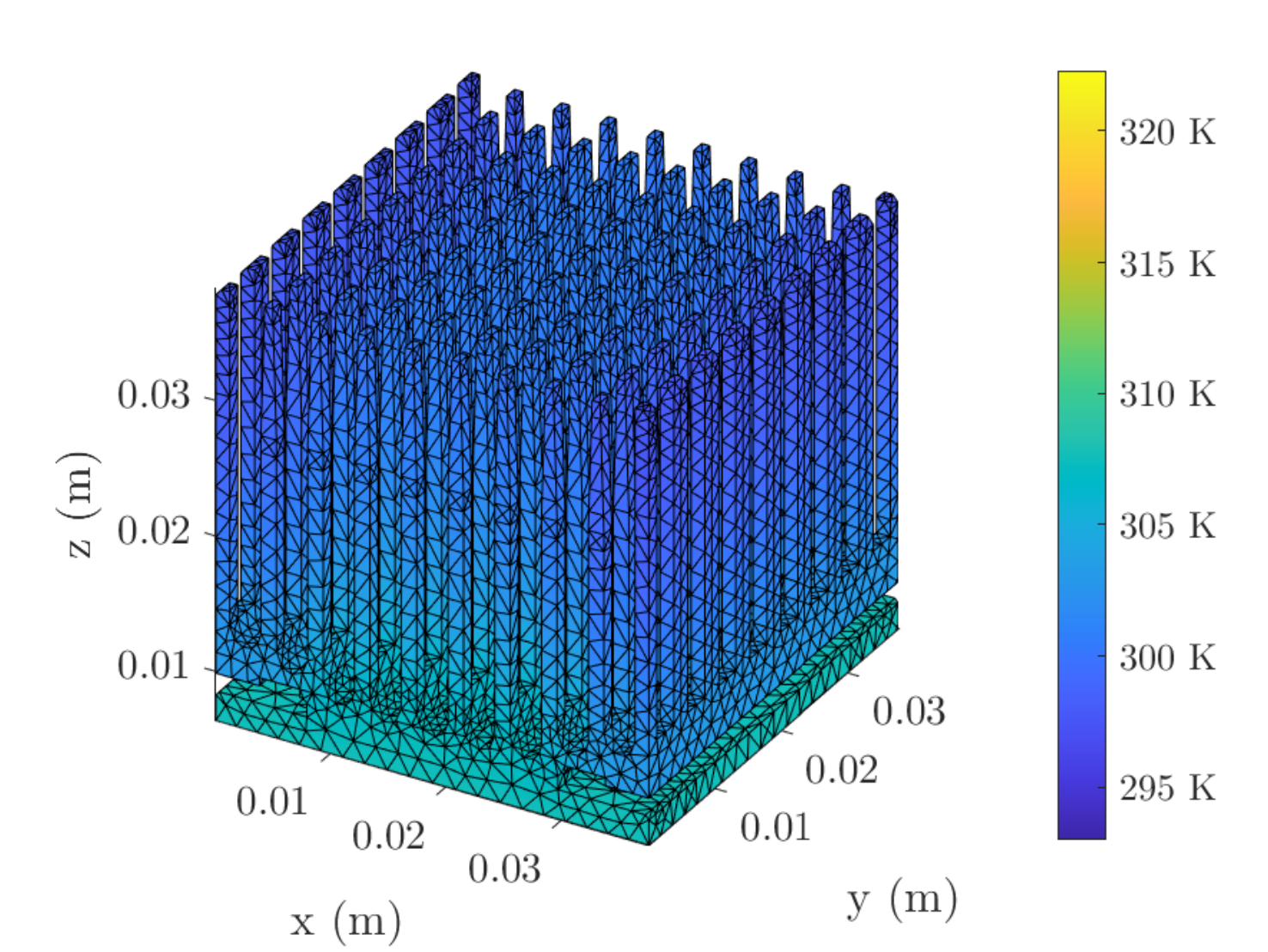}
		\caption{$t = 10$}
	\end{subfigure}
	\hfill
	\begin{subfigure}{0.49\textwidth}
		\includegraphics[width=\textwidth]{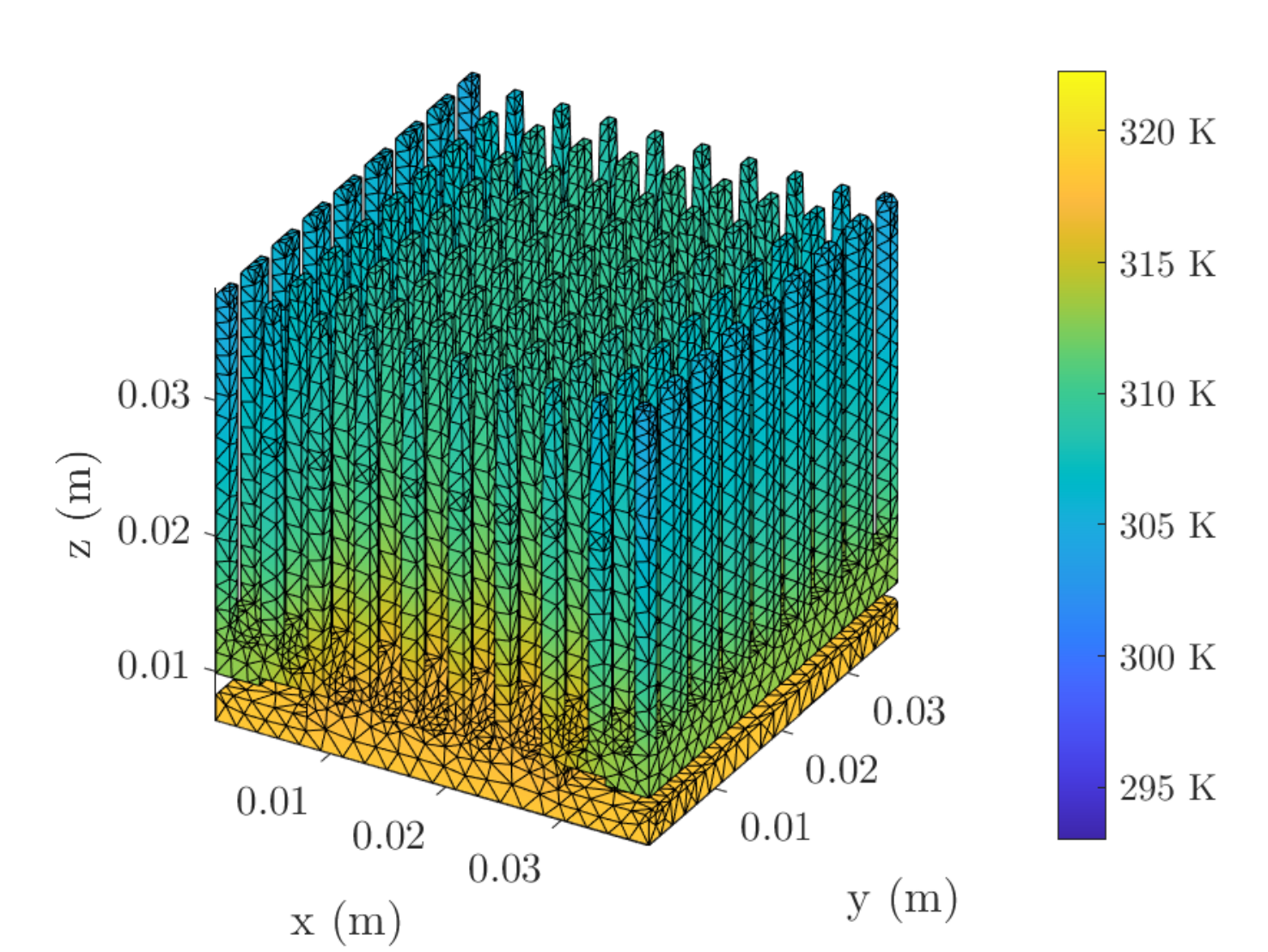}
		\caption{$t = 20$}
	\end{subfigure}
	\hfill
	\begin{subfigure}{0.49\textwidth}
		\includegraphics[width=\textwidth]{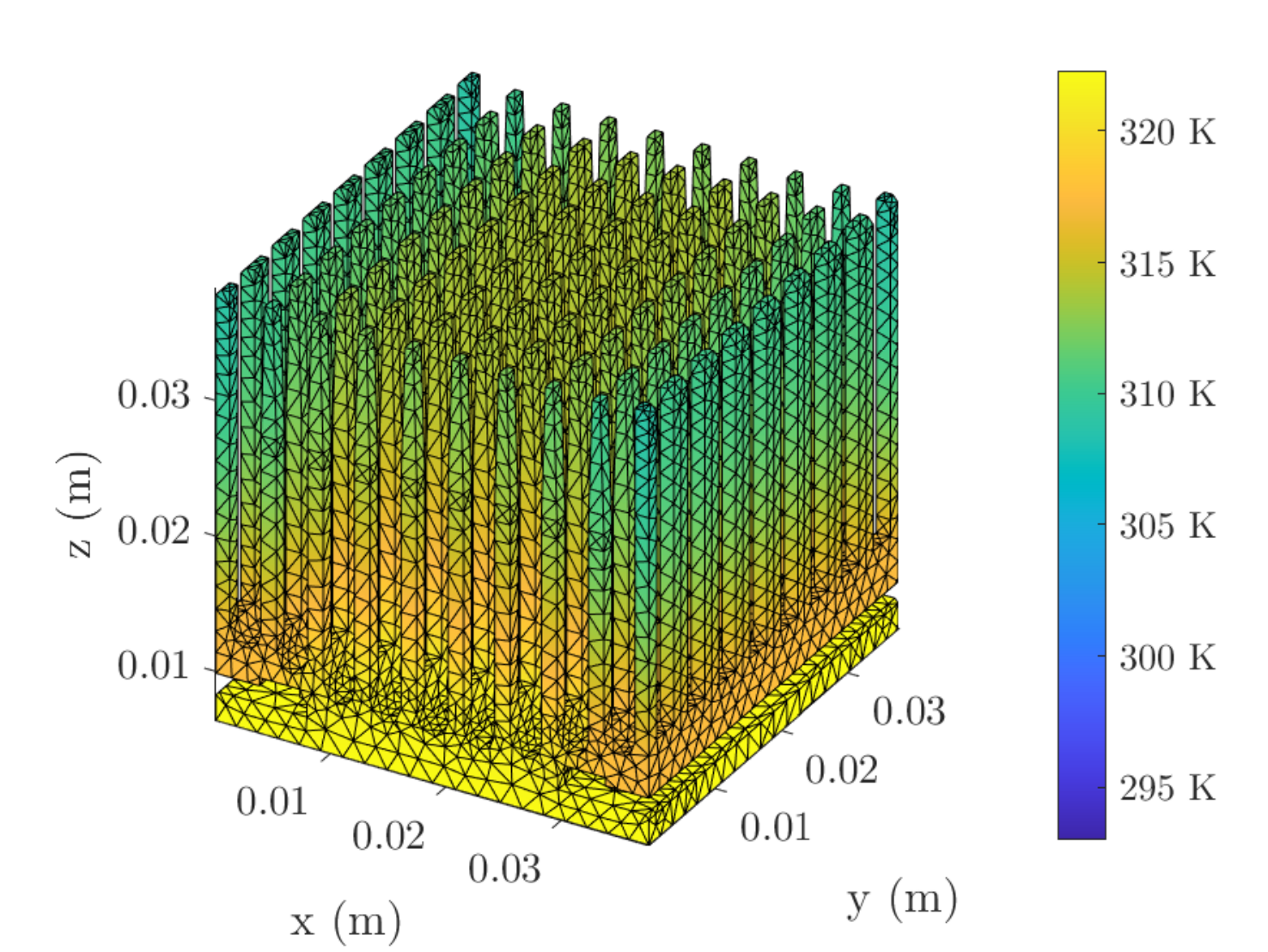}
		\caption{$t = 30$}
	\end{subfigure}
	\caption{Mesh and solution snapshots for the heat equation \cref{eq:heat_pde}.}
	\label{fig:heat_eq}
\end{figure}
\begin{table}[ht!]
	\centering
	\begin{tabular}{c|l|l}
		Variable & Description & Value \\ \hline
		$t_f$ & end time & \SI{30}{\second} \\
		$T_\infty$ & ambient air temperature & \SI{293}{\kelvin} \\
		$k$ & thermal conductivity & \SI{225.94}{\watt\per\meter\per\kelvin} \\
		$c_p$ & specific heat capacity & \SI{900}{\joule\per\kelvin\per\kilogram} \\
		$\rho$ & mass density & \SI{2698}{\kilogram\per\cubic\meter} \\
		$h_c$ & convective heat transfer coefficient & \SI{90}{\watt\per\square\meter\per\kelvin}
	\end{tabular}
	\caption{Parameters for heat equation \cref{eq:heat_pde}.}
	\label{tab:heat_eq_params}
\end{table}

Using MATLAB's PDE Toolbox, a second order, continuous finite element method is applied to the spatial dimensions of \cref{eq:heat_pde}.  The meshed heat sink contains 31139 elements and $\nvar = 65570$ degrees of freedom.  The resulting ODE is of the form \cref{eq:linear_inhomogeneous_ode} but with a mass matrix.

Fully implicit Runge--Kutta methods are some of the best-equipped to solve \cref{eq:linear_inhomogeneous_ode}, but even these are susceptible to order reduction.  For example, $s$-stage RadauIA methods have classical order $2s-1$ but stiff order $s-1$ for the PR problem \cite[Table 1]{prothero1974stability}.  Consider the third order RadauIA method
\begin{equation} \label{eq:radau}
	\begin{butchertableau}{c|cc}
		\btentry{0} & \frac{1}{4} & -\frac{1}{4} \\
		\frac{2}{3} & \frac{1}{4} & \frac{5}{12} \\ \hline
		& \frac{1}{4} & \frac{3}{4}
	\end{butchertableau},
	\qquad
	\lte_n = \frac{Z^2}{6} (Z^2-4 Z+6\eye{\nvar})^{-1} h^2 y''(t_n)
	+ \cdots.
\end{equation}
Following the same strategy used to derive SDIGARK3b in \cref{sec:Order_Reduction:pr:SDIRK_3}, we arrive at the following GARK extension to \cref{eq:radau}:
\begin{equation} \label{eq:radau_gark}
	\begin{butchertableau}{cc|ccccc}
		\btentry{0} & \frac{2}{3} & \btentry{-3} & \btentry{-2} & \btentry{-1} & \btentry{0} & \btentry{1} \\ \hline
		\frac{1}{4} & -\frac{1}{4} & -\frac{1}{81} & \frac{11}{162} & -\frac{17}{108} & \frac{53}{162} & -\frac{73}{324} \\
		\frac{1}{4} & \frac{5}{12} & -\frac{37}{972} & \frac{95}{486} & -\frac{137}{324} & \frac{389}{486} & \frac{32}{243} \\ \hline
		\frac{1}{4} & \frac{3}{4} & -\frac{11}{216} & \frac{7}{27} & -\frac{5}{9} & \frac{28}{27} & \frac{67}{216}
	\end{butchertableau},
	\qquad
	\lte_n = \frac{1}{72} h^4 y^{(4)}(t_n) + \cdots.
\end{equation}
The eigenvalues of $L$ for the discretized heat equation are all real and lie in the range $[-121314, -0.33]$.  For the range of $h$ used in our convergence experiments, the spectral radius of $Z$ can be as large as $40438$.  This is problematic for the local error of \cref{eq:radau}, and indeed, order reduction is exhibited in the convergence results of \cref{fig:heat_eq}.  With the GARK Radau IA scheme having a leading error term independent of $Z$, the global order of convergence is consistently three.

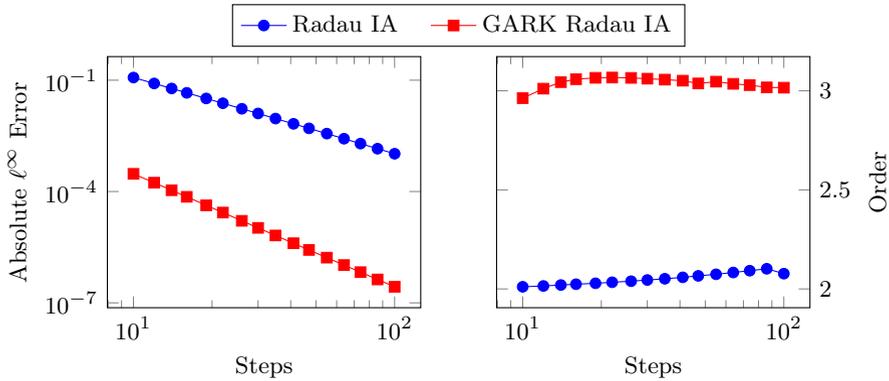
\begin{figure}[ht!]
	\centering
	\begin{tikzpicture}
		\begin{groupplot}[group style={group size=2 by 1},xlabel=Steps,width=0.48\textwidth]
			\nextgroupplot[xmode=log,ymode=log,ylabel={Absolute $\ell^{\infty}$ Error},legend to name=leg:radau]
			\addplot table[x=Steps,y=RadauIA Error,col sep=comma]{\datadir/heat.csv}; \addlegendentry{Radau IA}
			\addplot table[x=Steps,y=GARK RadauIAb Error,col sep=comma]{\datadir/heat.csv}; \addlegendentry{GARK Radau IA}
			\coordinate (c1) at (rel axis cs:0,1);
			\nextgroupplot[xmode=log,ylabel={Order},yticklabel pos=right]
			\addplot table[x=Steps,y=RadauIA Order,col sep=comma]{\datadir/heat.csv};
			\addplot table[x=Steps,y=GARK RadauIAb Order,col sep=comma]{\datadir/heat.csv};
			\coordinate (c2) at (rel axis cs:1,1);
		\end{groupplot}
		\coordinate (c3) at ($(c1)!.5!(c2)$);
		\node[above] at (c3 |- current bounding box.north) {\pgfplotslegendfromname{leg:radau}};
	\end{tikzpicture}
	\caption{Convergence and order for the methods \cref{eq:radau,eq:radau_gark} when applied to the heat equation \cref{eq:heat_pde}.}
	\label{fig:radau_convergence}
\end{figure} 

\section{Connections to Existing Analyses}
\label{sec:Order_Reduction:connections}

We now show how existing analyses of order reduction of Runge--Kutta methods can be viewed as special cases of the GARK analysis in this work.

If we set
\begin{equation} \label{eq:degenerate_gark}
\A\comp{1,1} = \A\comp{1,2} = A,
\quad
\b\comp{1} = \b\comp{2} = b,
\quad
\c\comp{1} = \c\comp{2} = c,
\end{equation}
the GARK method \cref{eq:gark_simplified} degenerates into the traditional Runge--Kutta method \cref{eq:rk}.  If we assume that this method has classical order $p$, the local error coefficients \cref{eq:lte_residuals} simplifiy to
\begin{equation} \label{eq:simplified_residual}
\begin{split}
W_0(z) &= 0 \\
W_k(z) &= 1 + \left( b^T  + z b^T  \left( \eye{s} - z A \right)^{-1} A \right) \left( z c^{k} - k c^{k-1} \right) \\
&= z b^T  \left( \eye{s} - z A \right)^{-1} \left( c^{k} - k A c^{k-1} \right), \qquad k \geq 1.
\end{split}
\end{equation}
With \cref{eq:degenerate_gark}, coefficients \eqref{eq:residual_coeffs} read
\begin{equation} \label{eq:simplified_residual_coeffs}
	w_{k,\ell} = \begin{cases}
		0, & k = 0,  \\
		1 - k b^T c^{k-1}, & k > 0, \ell = 0, \\
		b^T A ^{\ell-1} \left( c^{k}- k A  c^{k-1}\right), & k > 0, \ell \ge 1,
	\end{cases}
\end{equation}
and \eqref{eq:simplified_laurent_coeffs} becomes
\begin{equation} \label{eq:simplified_laurent_coeffs}
	x_{k,\ell} = \begin{cases}
		0, & k = 0, \ell \geq 0, \\
		0, & k \geq 0, \ell = -1, \\
		1 - b^T \boldsymbol{\Omega} c^{k}, & k > 0, \ell = 0, \\
		b^T \boldsymbol{\Omega}^{\ell+1} \left( c^{k} - k A c^{k-1}\right), & k > 0, \ell > 0,
	\end{cases}
\end{equation}
where we have used the fact that $b^T \boldsymbol{\Omega} A = b^T$.

The analysis of weak stage order conditions \cite[equation 3]{ketcheson2020dirk} defines the functions
\begin{equation*}
g^{(k)} = - \frac{1}{k} z b^T  \left( \eye{s} - z A \right)^{-1} \left( c^{k} - k A c^{k-1} \right), \qquad k \geq 1,
\end{equation*}
which match \cref{eq:simplified_residual} up to an inconsequential scaling.  Consider, for example, the Runge--Kutta method \cref{eq:DIRK3_WSO3} which has weak stage order three.  One can verify that $g^{(k)} \equiv W_k(z) \equiv 0$ for $k = 1, 2, 3$.  In fact, weak stage order $\widetilde{q}$ is equivalent to $g^{(k)} \equiv W_k(z) \equiv 0$ for $k = 1, \dots, \widetilde{q}$.

Ostermann and Roche use the functions
\begin{equation} \label{eq:Ostermann_W}
	W_k(z) = \frac{b^T (\eye{s} - z A)^{-1} (c^k - k A c^{k-1})}{1 - R(z)}, \qquad k \geq 1
\end{equation}
for the analysis of Runge--Kutta methods applied to linear, parabolic PDEs posed in Hilbert spaces \cite{ostermann1992runge}.  Again, order reduction can be mitigated by setting $W_k(z) \equiv 0$ for an appropriate set of $k$.  For small $z$, a series expansion of $W_k(z)$ is shown in \cite[page 406]{ostermann1992runge}. The requirement  $W_k(z) \equiv 0$ for $k = 1, \dots, p-1$ yields the order conditions
\begin{equation} \label{eq:classical_nonstiff}
	b^T A^{\ell} c^k - k b^T A^{\ell+1} c^{k-1} = 0, \qquad 0 \leq \ell \leq p - k - 1, \text{ and } 1 \leq k \leq p - 1.
\end{equation}
These correspond with our results in \cref{eq:simplified_residual_coeffs}.  The slightly different scaling of \cref{eq:Ostermann_W} compared to \cref{eq:simplified_residual} allows Ostermann and Roche to expand the global error in terms of $h^{k+1} W_k(Z) Z y^{(l)}(t)$ where $l = k, k+1$.  This is in contrast to the $h^k W_k(Z) y^{(k)}(t)$ we use.  Depending on the spectral properties of $Z$ and the choice of norm, $\nu$ can be rational in $h^{k+1} \norm{W_k(Z) Z y^{(l)}(t)} = \order{h^{\nu}}$.  With some care, the approach of Ostermann and Roche can be extended to GARK methods and can explain fractional orders of convergence.

The nonstiff order conditions \cref{eq:classical_nonstiff} also appear in the global error analysis of Runge--Kutta methods applied to the PR problem \cite[page 108]{rang2014analysis}.  Further, Rang expands the global error about $z = \infty$ to derive stiff order conditions \cite[equations 20 and 21]{rang2014analysis}.  These match our $x_{k,\ell}$ coefficients in \cref{eq:simplified_laurent_coeffs}.

Finally, we note there is a strong connection between our GARK-based approach and the technique of modifying boundary conditions within Runge--Kutta stages to eliminate order reduction \cite{abarbanel1996removal,pathria1997correct,alonso2002runge,alonso2004avoiding}.  Recall, for example, GARK4 from \cref{eq:rk4_gark}.  One can verify with a Taylor expansion that
\begin{equation*}
	\left( \A\comp{1,2} \otimes \eye{\nvar} \right) g \mleft( t_n + \c\comp{2} h \mright) = \begin{bmatrix}
		0 \\
		\frac{1}{2} g(t_n) \\
		\frac{1}{2} g(t_n) + \frac{h}{4} g'(t_n) \\
		g(t_n) + \frac{h}{2} g'(t_n) + \frac{h^2}{4} g''(t_n)
	\end{bmatrix} + \order{h^5}.
\end{equation*}
These are exactly the modified boundary conditions given in \cite[equations 2.10 to 2.12]{abarbanel1996removal} for RK4.  Conversely, it is possible to convert modified boundary conditions into the companion method of \cref{eq:gark_simplified} by ``undoing'' the Taylor series.  We can see both techniques control the local truncation error in a similar manner but differ in the treatment of $g$: linear combinations at different times versus linear combinations of derivatives.
\section{Conclusions}
\label{sec:Order_Reduction:conclusion}

Even on simple, linear ODEs, Runge--Kutta methods are susceptible of order reduction: the actual order of convergence may be lower than that predicted by the order condition theory. In the last several decades, studies into B-convergence and stiff order conditions have addressed issues of order reduction but in ways that are often expensive.  The solutions involve additional order conditions, which often require additional stages or more coupling among stages, which increases the computational costs.

This work develops an inexpensive approach to avoiding order reduction in linear problems that only changes the number of forcing evaluations.  The GARK framework has provided the necessary foundation to couple Runge--Kutta methods, possibly with differing numbers of stages, for the linear and forcing terms.  Our approach contains previous works on alleviating order reduction as special cases.  We have presented an error analysis that makes no assumptions on the dependence of $Z$ on $h$ and derived conditions to ensure convergence independent of the stiffness.  Finally, our numerical experiments have shown the computational effectiveness of the new schemes on problems like the scalar PR problem as well as more challenging PDEs.  There are several possible extensions to this work including nonlinear problems and application of implicit-explicit (IMEX) methods.

\begin{appendices}
\section{Proof of Classical Order Conditions}
\label{app:classical_order_conditions}

\begin{proof}[Proof of \cref{tmh:classical_order_conditions}]
	In order to use N-tree order condition theory \cite{araujo1997symplectic,sandu2015generalized}, we switch to an autonomous form of \cref{eq:ode_split}:
	\begin{equation} \label{eq:ode_autonomous}
		\widetilde{y}' = \underbrace{\begin{bmatrix}
			L & 0 \\ 0 & 0
			\end{bmatrix} \widetilde{y}}_{\widetilde{f}\comp{1}(\widetilde{y})} + \underbrace{\begin{bmatrix}
			g(t) \\ 0
			\end{bmatrix}}_{\widetilde{f}\comp{2}(\widetilde{y})} + \underbrace{\begin{bmatrix}
			0 \\ 1
			\end{bmatrix}}_{\widetilde{f}\comp{3}(\widetilde{y})},
		\qquad
		\widetilde{y} = \begin{bmatrix}
			y \\ t
		\end{bmatrix} \in \R{\nvar + 1}.
	\end{equation}
	Note that the original method \cref{eq:gark_simplified} is equivalent to the GARK scheme
	\begin{equation*}
		\begin{butchertableau}{c|c|c}
			\A\comp{1,1} & \A\comp{1,2} & \c\comp{1} \\ \hline
			\A\comp{2,1} & \A\comp{2,2} & \c\comp{2} \\ \hline
			\b\comp{1}* & \b\comp{2}* & 1 \\ \hline
			\b\comp{1}* & \b\comp{2}* & 1
		\end{butchertableau}
	\end{equation*}
	applied to \cref{eq:ode_autonomous}.  This three-partitioned method is order $p$ if and only if
	\begin{equation*}
		\sum_{\substack{\tree \in T_3\\\rho(\tree) \leq p}} \left( \Phi(\tree) - \frac{1}{\gamma(\tree)} \right) F(\tree)(\widetilde{y}) = 0,
	\end{equation*}
	where $T_3$ is the set of three-trees, and $\rho$, $\Phi$, $\gamma$ are the order, elementary weight, and density of a tree, respectively.  Tree vertices for partitions one, two, and three are represented by $\btree{[1]}$, $\btree{[2]}$, and $\btree{[3]}$, respectively.  The elementary differentials for \cref{eq:ode_autonomous} simplify to
	\begin{align*}
		F\mleft( \tree \mright)(\widetilde{y}) &= \begin{cases}
			\widetilde{y}, & \text{if } \tree = \emptyset, \\
			\begin{bmatrix}
				0 \\ 1
			\end{bmatrix}, & \text{if } \tree = \btree{[3]}, \\
			\begin{bmatrix}
				L & 0 \\ 0 & 0
			\end{bmatrix} F(\utree)(\widetilde{y}), & \text{if } \tree = \btree{[1[$\utree$]]}, \text{ where } \utree \in T_3, \\
			\begin{bmatrix}
				g^{(m)}(t) \\ 0
			\end{bmatrix}, & \text{if } \tree = \overbrace{\btree{[2[3][.][3]]}}^{m}, \text{ where } m \geq 0, \\
			\zero{\nvar+1}, & \text{otherwise}.
		\end{cases}
	\end{align*}
	For the elementary differentials that do not vanish, we split their corresponding trees into four sets.  The first are trees where all vertices are $\btree{[1]}$ and singly-branched.  By considering their GARK elementary weights and densities, we recover the order conditions \cref{eq:nonstiff_order_conditions:linear}.  Similarly, bushy trees with $\btree{[2]}$ as the root and $\btree{[3]}$ for the leaves yield \cref{eq:nonstiff_order_conditions:bushy}.  Next, trees of the form
	\begin{equation*}
		\btree{[1[:[1[2]]]]}
		\quad \text{and} \quad
		\btree{[1[:[1[2[3][.][3]]]]]}
	\end{equation*}
	lead to \cref{eq:nonstiff_order_conditions:palm}.  Finally, the tree $\btree{[3]}$ corresponds to a trivially true order condition because $\b\comp{3} = \begin{bmatrix} 1 \end{bmatrix}$.
\end{proof}
\end{appendices}

\bibliography{main.bib}

\section*{Statements and Declarations}

\subsection*{Funding}

This work was performed under the auspices of the U.S. Department of Energy by Lawrence Livermore National Laboratory under Contract DE-AC52-07NA27344 and was supported by the U.S. Department of Energy, Office of Science, Office of Advanced Scientific Computing Research.  LLNL-JRNL-830949

This document was prepared as an account of work sponsored by an agency of the United States government. Neither the United States government nor Lawrence Livermore National Security, LLC, nor any of their employees makes any warranty, expressed or implied, or assumes any legal liability or responsibility for the accuracy, completeness, or usefulness of any information, apparatus, product, or process disclosed, or represents that its use would not infringe privately owned rights. Reference herein to any specific commercial product, process, or service by trade name, trademark, manufacturer, or otherwise does not necessarily constitute or imply its endorsement, recommendation, or favoring by the United States government or Lawrence Livermore National Security, LLC. The views and opinions of authors expressed herein do not necessarily state or reflect those of the United States government or Lawrence Livermore National Security, LLC, and shall not be used for advertising or product endorsement purposes.

Steven Roberts (in part) and Adrian Sandu were supported by awards NSF ACI--1709727, NSF CDS\&E--MSS 1953113, DE-SC0021313, and by the Computational Science Laboratory at Virginia Tech.

\subsection*{Competing Interests}

The authors have no relevant financial or non-financial interests to disclose.

\subsection*{Author Contributions}

Both authors contributed to the method formulation, order conditions, and convergence analysis.  Numerical experiments and the first draft of the manuscript were prepared by Steven Roberts.  Both authors read and approved the final manuscript.

\subsection*{Data Availability}

The datasets generated during and/or analysed during the current study are available from the corresponding author on reasonable request.

\end{document}